\definecolor{forestgreen(traditional)}{rgb}{0.0, 0.27, 0.13}
\definecolor{forestgreen(web)}{rgb}{0.13, 0.55, 0.13}
\definecolor{airforceblue}{rgb}{0.36, 0.54, 0.66}
\numberwithin{equation}{section}
\newcommand{\defi}[1]{\textsf{#1}} % for defined terms
\newcommand \im   {\ensuremath{\mathrm{im}}}
\newcommand \Tan {\ensuremath{\mathrm{Tan}}}
\newcommand \Sing{\ensuremath{\mathrm{Sing}}}
\newcommand \codim {\ensuremath{\mathrm{codim}}}
\newcommand \rank {\ensuremath{\mathrm{rank}}}
\newcommand \brank {\ensuremath{\underline{\mathrm{rank}}}}
\newcommand{\SL}{\operatorname{SL}}
\def\P{{\mathbb P}}
\def\PP{{\mathbb P}}
\def\CC{{\mathbb C}}
\def\NN{{\mathbb N}}
\def\RR{{\mathbb R}}
\def\dtwo{\lfloor \frac d2\rfloor}
\newcommand \p[1] {\phi_{#1}}
\newcommand \st[1] {\stackrel{#1}{\longrightarrow}}
\newcommand{\dbchk}{{\bf \textcolor{violet}{** need to be checked later **}}}
\def\ds{\displaystyle}
\newcommand \rrlap[1]{\hbox to 0pt{#1}}
\newcommand \tr[1]{\textcolor{red}{#1}}
\newcommand \tte[1]{\textcolor{teal}{#1}}
\newcommand \tb[1]{\textcolor{brown}{#1}}
\newcommand \tv[1]{\textcolor{violet}{#1}}
\newcommand \ti[1]{\textit{#1}}
\newcommand \trm[1]{\textrm{#1}}
\newcommand \tbf[1]{\textbf{#1}}
\newcommand \mbf[1]{\mathbf{#1}}
\theoremstyle{theorem} % default
\newtheorem{Thm}{Theorem}[section]
\newtheorem{Prop}[Thm]{Proposition}
\newtheorem{Coro}[Thm]{Corollary}
\newtheorem{Lem}[Thm]{Lemma}
\theoremstyle{definition}
\newtheorem{Ex}[Thm]{Example}
\newtheorem{Qu}[Thm]{Question}
\newtheorem{Problem}[Thm]{Problem}
\newtheorem{Remk}[Thm]{Remark}
\numberwithin{equation}{section}
\begin{document}

\title{On singularities of third secant varieties of Veronese embeddings}
\author[K.\ Han] {Kangjin Han}
\address{School of Undergraduate Studies,
Daegu-Gyeongbuk Institute of Science \& Technology (DGIST),
333 Techno jungang-daero, Hyeonpung-myeon, Dalseong-gun
Daegu 42988,
Republic of Korea}
\email{kjhan@dgist.ac.kr}

%\thanks{${}^{*}$ Corresponding author.}
\thanks{The author was supported by Basic Science Research Program through the National Research Foundation
of Korea (grant No. 2012R1A1A2038506), the POSCO Science Fellowship of POSCO TJ Park Foundation, and the DGIST Start-up Fund of the Ministry of Science, ICT and Future Planning (No.\ 2016010066). }

%\date{\today}

%\\
%2000 Mathematics Subject Classification:

\begin{abstract}
In this paper we study singularities of third secant varieties of Veronese embedding $v_d(\P^n)$, which corresponds to the variety of symmetric tensors of border rank at most three in $(\mathbb{C}^{n+1})^{\otimes d}$. \end{abstract}

\keywords{singularity, secant variety, Veronese embedding, Segre embedding}
\subjclass[2010]{14M12, 14J60, 15A21, 15A69}
\maketitle
\tableofcontents \setcounter{page}{1}
%%%%%%%%%%%%%%%%%%%%%%%%%%%%%%%%%%%%%%%%%%%%%%%%%%%%%%%%%%%%%%%%%%%%%%%%%%%

\section{Introduction}\label{intro}

For a projective algebraic variety $X\subset \P W$, the \ti{$k$-th secant variety} $\sigma_k(X)$ is defined
by
\begin{equation}
\sigma_k(X)     = \overline{ \bigcup_{x_1\cdots x_k\in X}\P \langle x_1\cdots x_k\rangle }\subset \P W
\end{equation}
where $\langle x_1\cdots x_k\rangle\subset W$ denotes the linear span of the points $x_1\cdots x_k$ and the overline
denotes Zariski closure. Let $V$ be an $(n+1)$-dimensional complex vector space and $W=S^d V$ be the subspace of symmetric $d$-way tensors in $V^{\otimes d}$. Equivalently, we can also think of $W$ as the space of homogeneous polynomials of degree $d$ in $n+1$ variables. When $X$ is the Veronese embedding $v_d(\P V)$ of rank one symmetric $d$-way tensors over $V$ in $\P W$,  then $\sigma_k(X)$ is the variety of symmetric $d$-way tensors of border rank at most $k$ (see Subsection \ref{prelim} for terminology and details).

If $X$ is an irreducible variety and $\sigma_k(X)$ its $k$-secant variety, then it is well known that 
\begin{equation}\label{sing_ineq}
\Sing(\sigma_k (X))\supseteq \sigma_{k-1}(X)~,
\end{equation}
(e.g. see \cite[Corollary 1.8]{Ad}). Equality holds in many basic examples, like determinantal varieties defined by minors of a generic matrix, but the strict inequality also holds for some other tensors (e.g. just have a look at \cite[Corollary 7.17]{MOZ} for the case $\sigma_2(X)$ when $X$ is the Segre embedding $\P V_1\times\cdots\times\P V_r$ or \cite[Figure 1, p.18]{AOP2} for the third secant variety of Grassmannian $\mathbb{G}(2,6)$).

Therefore, it should be very interesting to compute more cases and to give a general treatment about singularities of secant varieties. Further, the knowledge of singular locus is known to be very crucial to the so-called \ti{identifiablity problem}, which is to determine uniqueness of a tensor decomposition (see \cite[Theorem 4.5]{COV}). It has recently been paid more attention in this context. In this paper, we deal with the case of third secant variety of Veronese embeddings, $\sigma_3(v_d (\P V))$.

From now on, let $X$ be the Veronese variety $v_d(\P V)$ in $\P S^d V=\P^N$ with $N=\dim_\CC S^d V-1={n+d\choose n}-1$. One could ask the following problem:

\begin{Problem}\label{prbm_Vero} Let $V=\CC^{n+1}$. Determine for which triple $(k,d,n)$ it does hold that 
\[\Sing(\sigma_k(v_d(\P V)))=\sigma_{k-1}(v_d(\P V))\]
for every $k\ge2, d\ge2$ and $n\ge1$ or describe $\Sing(\sigma_k(v_d(\P V)))$ if it is not the case.
\end{Problem}

We'd like to remark here that our question is a set-theoretic one. First, it is classical that the answer to Problem \ref{prbm_Vero} is true for the binary case (i.e. $n=1$) (see e.g. \cite[Theorem 1.45]{IK}) and also for the case of quadratic forms (i.e. $d=2$) (see e.g. \cite[Theorem 1.26]{IK}). In the case of $k=2$, Kanev proved in \cite[Theorem 3.3]{Kan} that this holds for any $d,n$. Thus, we only need to take care of the cases of $k\ge3, d\ge3$ and $n\ge2$. Look at the table in Figure \ref{sing3table}.

%For the first case $(k,d,n)=(3,3,2)$, it is also well-known that the singular locus of the Aronhold hypersurface $\sigma_3(v_3(\P^2))$ in $\P^9$ is equal to $\sigma_2(v_3(\P^2))$ (e.g. \cite[see remarks in section 2]{Ott}). 
%{\bf Acknowledgements} G. Ottaviani

\section{Singularities of third secant of $v_d(\P^n)$}\label{sect_Vero}

Choose any form $f\in S^{d}V$. We define the \defi{space of essential variables} of $f$ to be $$\langle f\rangle:=\{\partial\in V^{\vee}|\partial(f)=0\}^{\perp}$$ in $V$. So, $f$ also belongs to $S^d \langle f\rangle$ and $\dim\langle f\rangle$ is the minimal number of variables in which we can express $f$ as a homogeneous polynomial of degree $d$ (we also call $\dim\langle f\rangle$ the \defi{number of essential variables} of $f$, see e.g. \cite{Cal}). Note that $\dim\langle f\rangle=1$ means $f\in v_d(\P V)$ by definition. We often abuse $f$ to denote the point $[f]$ in $\P S^d V$ represented by it. We say a form $f\in\sigma_3(X)\setminus	\sigma_2(X)$ to be \ti{degenerate} if $\dim\langle f\rangle=2$ and \ti{non-degenerate} otherwise. We denote the locus of degenerate forms by $\mathcal{D}$. We begin this section by stating our main theorem for the cases of $k=3, d\ge3$ and $n\ge2$.

\begin{Thm}[Singularity of $\sigma_3(v_d(\P^n))$]\label{sing3vero} Let $X$ be the $n$-dimensional Veronese variety $v_d(\P V)$ in $\P^N$ with $N={n+d\choose d}-1$. Then, the following holds that the singular locus
\begin{displaymath}
\Sing(\sigma_3(X))=\sigma_2(X)
\end{displaymath}
as a set for all $(d,n)$ with $d\ge3$ and $n\ge2$ unless $d=4$ and $n\ge3$. In the exceptional case $d=4$, for each $n\ge3$ the singular locus $\Sing(\sigma_3(v_4(\P V)))$ is $\mathcal{D}_4\cup\sigma_2(v_4(\P V))$, where $\mathcal{D}_4$ denotes the locus of all the degenerate forms $f$ (i.e. $\dim\langle f\rangle=2$) in $\sigma_3(v_4(\P V))\setminus\sigma_2(v_4(\P V))$.
\end{Thm}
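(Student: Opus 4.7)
The easy inclusion $\sigma_2(X) \subseteq \Sing(\sigma_3(X))$ is automatic from \eqref{sing_ineq}, so my plan is to classify the smooth points of $\sigma_3(X) \setminus \sigma_2(X)$ and, in the exceptional range, to verify $D \subseteq \Sing(\sigma_3(X))$. First I stratify $\sigma_3(X) \setminus \sigma_2(X)$ by $\dim \langle f \rangle = \rank C_1(f) \in \{2, 3\}$, calling $f$ \emph{non-degenerate} when this equals $3$ and \emph{degenerate} when it equals $2$. In each case I set $V_f := \langle f \rangle$, pick a complement $V = V_f \oplus W$, and analyze $T_f \sigma_3(X)$ via Terracini, writing the tangent plane of a rank-$3$ decomposition as $\sum_i \ell_i^{d-1}V_f + \sum_i \ell_i^{d-1}W$ (internal plus external contributions).

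For non-degenerate $f$, Terracini at a rank-$3$ decomposition $f = \sum_{i=1}^3 \ell_i^d$ with $\ell_i$ spanning $V_f$ yields affine dimension $9 + 3(n-2) = 3n+3$, projectively $3n+2 = \dim \sigma_3(X)$, so $f$ is smooth. Non-rank-$3$ border-rank-$3$ forms in this stratum are handled by a case analysis on $\GL(V)$-normal forms, with the base case $n=2$ (classical Aronhold for $d=3$ and a direct apolarity/catalecticant computation at each normal form for $d \ge 4$) providing the foundation; limiting Terracini or apolar-scheme arguments recover the same tangent dimension.

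For degenerate $f$ the case $d = 3$ is vacuous, since every binary cubic has border rank $\le 2$. For $d \ge 5$, the apolar ideal satisfies $\dim \ann(f)_3 = 1$, so the rank-$3$ (or limiting) decomposition is essentially unique and Terracini gives $\dim \sum_i \ell_i^{d-1}V = 6 + 3(n-1) = 3n+3$ affine, projectively $3n+2 = \dim \sigma_3(X)$, hence $f$ is smooth. The defining feature of $d = 4$ is that $\sigma_3(v_4(\P V_f)) = \P S^4 V_f$ fills $\P^4$, which forces $\dim \ann(f)_3 = 2$ and produces a whole $\P^1$-family of rank-$3$ decompositions of $f$. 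As the apolar form $q$ varies in $\P\ann(f)_3 \simeq \P^1$, its three roots $\ell_i(q)$ sweep a dense subset of $\P V_f$, and the union of the associated Terracini planes fills the subspace $S^4 V_f + S^3 V_f \cdot W$ (using that $\{\ell^3 : \ell \in V_f\}$ spans $S^3 V_f$), of affine dimension $5 + 4(n-1) = 4n+1$. For $n = 2$ this equals $3n + 3 = 9$, matching $\dim \sigma_3(X) + 1$ affine, so $f$ is smooth (no exception); for $n \ge 3$ it strictly exceeds $3n + 3$, so $f$ is singular, yielding $D \subseteq \Sing(\sigma_3(X))$ in the exceptional range.

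The hardest step will be the degenerate $d=4$ case: I must rigorously verify (i) that the $\P^1$-family of apolar forms in $\ann(f)_3$ genuinely parameterizes rank-$3$ decompositions whose roots cover a dense subset of $\P V_f$, and (ii) that the union of the resulting Terracini planes exhausts $S^4 V_f + S^3 V_f \cdot W$ and not a proper subspace, so as to produce the tangent-space overcount for $n \ge 3$ and exact match for $n = 2$. A secondary technical point is the treatment of border-rank-$3$ non-degenerate forms whose Waring rank exceeds $3$, where naive Terracini must be replaced by apolar-scheme deformation arguments.
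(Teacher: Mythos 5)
There is a genuine gap, and it sits at the heart of every smoothness claim in your plan. Terracini-type computations only ever give a \emph{lower} bound on the tangent space: for any decomposition $f=\sum_i \ell_i^d$ (or any limit of such), the span $\sum_i \ell_i^{d-1}V$ is \emph{contained} in $\hat{T}_f\sigma_3(X)$, with equality guaranteed only at general points. So when you compute this span to have affine dimension $3n+3$ and conclude ``so $f$ is smooth,'' the logic is inverted: you have shown $\dim T_f\sigma_3(X)\ge 3n+2$, which is automatic, not $\le 3n+2$, which is what smoothness requires. Indeed your own $d=4$ degenerate analysis exhibits exactly the phenomenon that defeats this reasoning: the tangent space can strictly exceed the Terracini span of any single decomposition. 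Certifying smoothness needs an upper bound on $T_f$, i.e.\ a lower bound on the conormal space, and that is only available through the known scheme-theoretic equations of $\sigma_3$ (the $4\times4$ minors of $\phi_{d-1,1}$ and $\phi_{d-\lfloor d/2\rfloor,\lfloor d/2\rfloor}$, Proposition \ref{eqn_s3}); this is what the paper does, computing $\hat{N}^{\vee}_{f}\sigma_3(X)=(f^\perp)_{\lfloor d/2\rfloor}\cdot(f^\perp)_{d-\lfloor d/2\rfloor}(+\cdots)$ at each normal form and matching it with the codimension (via the Hilbert function of $I^2$ for the non-degenerate orbits, and a direct monomial count for the degenerate ones). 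Your deferral to ``a direct apolarity/catalecticant computation at each normal form for $n=2$'' is in fact where the entire content lies, and even granting it, your proposal has no mechanism to propagate smoothness from $n=2$ to $n\ge3$: the ambient Terracini count does not do this, whereas the paper uses the uniqueness of the spanning $\P^2$ (Corollary \ref{coro_span}) to fiber the non-degenerate locus over $\Gr(\P^2,\P^n)$ with fibers $\sigma_3(v_d(\P^2))$-minus-bad-loci. A further, secondary problem: the orbits $x_0^{d-1}x_1+x_2^d$ and $x_0^{d-2}x_1^2+x_0^{d-1}x_2$ (and the non-general degenerate binary forms for $d\ge5$) have rank strictly larger than $3$, so there is no rank-$3$ decomposition to feed into Terracini at all, and ``limiting Terracini'' still only yields lower bounds.

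By contrast, the one place where a lower bound is what you need --- showing $D\subseteq\Sing(\sigma_3(v_4(\P^n)))$ for $n\ge3$ --- your argument is sound in outline and genuinely different from the paper's: sweeping the $\P^1$-family of apolar cubics of a degenerate quartic to show that the union of Terracini planes spans $S^4V_f+S^3V_f\cdot W$, of affine dimension $4n+1>3n+3$, does force singularity (the paper instead shows $\dim\hat{N}^{\vee}_{f_D}\sigma_3={4+n\choose 4}-4n-1$, strictly less than the codimension ${4+n\choose4}-3n-3$ when $n\ge3$, which is the dual statement). But for the $n=2$, $d=4$ degenerate case and for all remaining smoothness assertions you must replace the Terracini matching by a conormal-space (Jacobian) computation against the determinantal equations, or the proof does not close.
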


\begin{figure}[!htb]
$$
\begin{array}{|l|c|c|}
\hline
{\bf (k,d,n)}&{\bf Singular~locus~of~}\sigma_k(v_d(\P^n))&{\bf Comment~\&~Reference}\\
\hline
(\ge2,\ge2,1)&\sigma_{k-1}&\trm{Classical - case of binary forms, \cite[Theorem 1.45]{IK}}\\
\hline
(\ge2,2,\ge1)&\sigma_{k-1}&\trm{Symmetric matrice case, \cite[Theorem 1.26]{IK}}\\
\hline
(2,\ge2,\ge1)&\sigma_{1}&\trm{\cite[Theorem 3.3]{Kan}}\\
\hline
(3,3,\ge 2)&\sigma_{2}&\trm{Aronhold case - Thm. \ref{singAro} ($n=2$), Coro. \ref{thm_d=3} ($n\ge3$)}\\
\hline
(3,\ge4,2)&\sigma_{2}&\trm{Thm. \ref{thm_nondeg}~+~Thm. \ref{thm_deg}}\\
\hline
(3,4,\ge3)&\mathcal{D}_4\cup\sigma_{2}&\trm{Only exceptional case ($d=4$), Thm. \ref{thm_deg}}\\
\hline
(3,\ge5,\ge3)&\sigma_{2}&\trm{Thm. \ref{sing3vero}}\\
\hline
\end{array}
$$
\caption{Singular locus of $\sigma_k(v_d(\P^n))$.}
\label{sing3table}
\end{figure}

\subsection{Preliminaries}\label{prelim}

For the proof, we recall some preliminaries on (border) ranks and geometry of symmetric tensors and list a few known facts on them for future use.

First of all, the scheme-theoretic equations defining $\sigma_3(v_d(\P V))$ come from so-called \defi{symmetric flattenings} unless $d=3$. In the case of $d=3$, we need Aronhold's equation (\ref{sect_Aronhold}) additionally (see e.g. \cite{LO}). Consider the polynomial ring $S^\bullet V=\CC[x_0,\ldots,x_n]$ (we call this ring $S$) and consider another polynomial ring $T=S^\bullet V^\vee=\CC[y_0,\ldots,y_n]$, where $V^\vee$ is the \ti{dual space} of $V$. Define the differential action of $T$ on $S$ as follows: for any $g\in T_{d-k}, f\in S_d$, we set
\begin{equation}
 g\cdot f=g(\partial_0,\partial_1,\ldots,\partial_n)f\in S_k~.
\end{equation}
Let us take bases for $S_k$ and $T_{d-k}$ as
\begin{align}\label{bases}
\mbf{X}^{I}=\frac{1}{i_0 !\cdots i_n !}x_0^{i_0}\cdots x_n^{i_n}&\quad\trm{and}\quad
\mbf{Y}^{J}=y_0^{j_0}\cdots y_n^{j_n}~,
\end{align}
with $|I|=i_0+\cdots+i_n=k$ and $|J|=j_0+\cdots+j_n=d-k$. For a given $f=\sum_{|I|=d}a_I\cdot \mbf{X}^I$ in $S_d$, we have a linear map
\[\phi_{d-k,k}(f):T_{d-k}\to S_k,\quad g\mapsto g\cdot f\] for any $k$ with $1\le k\le d-1$, which can be represented by the following ${k+n\choose n}\times{d-k+n\choose n}$-matrix:
\begin{equation}\label{flatmat}
\left(\begin{array}{ccc}
&&\\
&a_{I,J}&\\
&&\end{array}\right) \quad \trm{with $a_{I,J}=a_{I+J}$}~,
\end{equation}
in the bases defined above. We call this the \ti{symmetric flattening} (or \ti{catalecticant}) of $f$. It is easy to see that the transpose $\phi_{d-k,k}(f)^{T}$ is equal to $\phi_{k,d-k}(f)$.

Given a homogeneous polynomial $f$ of degree $d$, the minimum number of linear forms $l_{i}$ needed to write $f$ as a sum of $d$-th powers is the so-called (Waring) \defi{rank} of $f$ and denoted by $\rank(f)$. The (Waring) \defi{border rank} is this notion in the limiting sense. In other words, if there is a family $\{f_{\epsilon}\mid \epsilon >0 \}$ of polynomials with constant rank $r$ and $\lim_{\epsilon \to 0}f_{\epsilon} = f$, then we say that $f$ has border rank at most $r$. The minimum such $r$ is called the border rank of $f$ and denoted by $\brank(f)$. Note that by definition $\sigma_k(v_d(\P V))$ is the variety of homogeneous polynomials $f$ of degree $d$ with border rank $\brank(f)\le k$.

It is obvious that if $f$ has (border) rank 1, then any symmetric flattening $\phi_{d-k,k}(f)$ has rank 1. By subadditivity of matrix rank, we also know that $\rank~\phi_{d-k,k}(f)\le r$ if $\brank(f)\le r$. So, we could obtain a set of defining equations coming from minors of the matrix $\phi_{d-k,k}(f)$ for $\sigma_r(v_d(\P V))$. For $\sigma_3(v_d(\P V))$ and $d\ge4$, it is known that these minors are sufficient to cut it scheme-theoretically in \cite[Theorem 3.2.1 (1)]{LO} as follows:

\begin{Prop}[Defining equations of $\sigma_3(v_d(\P^n))$]\label{eqn_s3} Let $X$ be the $n$-dimensional Veronese variety $v_d(\P V)$ in $\P^N$ with $N={n+d\choose n}-1$. For any $(d,n)$ with $d\ge4, n\ge2$, $\sigma_3(X)$ is defined scheme-theoretically by the $4\times 4$-minors of the two symmetric flattenings
\[\phi_{d-1,1}(F)\colon {S^{d-1}V}^{\vee}\to V\quad\trm{and}\quad\phi_{d-\dtwo,\dtwo}(F)\colon {S^{d-\dtwo}V}^{\vee}\to S^{\dtwo}V~,\]
where $F$ is the form $\ds\sum_{I\in\NN^{n+1}} a_I\cdot\mbf{X}^I$ of degree $d$ as considering the coefficients $a_I$'s indeterminate.
\end{Prop}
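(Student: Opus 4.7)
The plan is to verify the two containments separately, with the scheme-theoretic equality being the subtle point.

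One inclusion is routine. For any linear form $l\in V$ and splitting $d=a+b$, the flattening $\phi_{a,b}(l^d)$ has rank one, being an outer product of the symmetrized tensors attached to $l$. Since $f\mapsto\phi_{a,b}(f)$ is linear, $\rank~\phi_{a,b}(f)\le 3$ whenever $f=l_1^d+l_2^d+l_3^d$, and semicontinuity of matrix rank propagates this bound to the closure $\sigma_3(X)$. Consequently every $4\times4$ minor of both catalecticants vanishes on $\sigma_3(X)$, giving $I_4\subseteq I(\sigma_3(X))$, where $I_4$ denotes the ideal generated by these minors.

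For the converse at the set-theoretic level, the strategy is via apolarity. The bound $\rank~\phi_{d-1,1}(f)\le 3$ is equivalent to $\dim\langle f\rangle\le 3$, so $f$ depends essentially on at most three variables; one thereby reduces the problem to forms on $\P^2$. In the reduced situation, the additional bound $\rank~\phi_{d-\dtwo,\dtwo}(f)\le 3$ on the middle catalecticant, combined with the Iarrobino-Kanev theory of Artinian Gorenstein quotients of small colength (\cite{IK}), forces the full Hilbert function of $T/f^\perp$ to be bounded by three, and hence places $f$ in the closure of the three-term power sums, i.e., in $\sigma_3(v_d(\P V))$.

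The scheme-theoretic strengthening is the heart of the matter and rests on the recent progress on defining equations of secant varieties, in particular the resolution of the Landsberg-Manivel conjecture for $\sigma_3(v_d(\P V))$ due to Raicu. The argument there decomposes both $I_4$ and $I(\sigma_3(X))$ into $GL(V)$-isotypic components and verifies, Schur functor by Schur functor, that the $4\times4$ minors already exhaust every irreducible component appearing in the latter. The main obstacle in a direct approach lies precisely here: set-theoretic equality follows from rank estimates on the flattenings alone, whereas scheme-theoretic equality demands ruling out any extra higher-degree syzygies in $I(\sigma_3(X))$, and this seems tractable only through a careful representation-theoretic bookkeeping.
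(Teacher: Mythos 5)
Your first containment is fine, but the proposal does not actually prove the hard direction; it defers it to a citation that does not exist. The scheme-theoretic statement you need is precisely Theorem 3.2.1(1) of \cite{LO} (Landsberg--Ottaviani), and that is exactly what the paper cites for $d\ge4$; there is no ``resolution of the Landsberg--Manivel conjecture for $\sigma_3(v_d(\P V))$ due to Raicu'' to invoke --- Raicu's theorems concern $\sigma_2$ (the Garcia--Stillman--Sturmfels conjecture for Segre--Veronese varieties and the $3\times3$ minors of catalecticants), not the $4\times4$ minors cutting out $\sigma_3$. So the step you yourself identify as ``the heart of the matter'' rests on a phantom reference, and as written the proposal is a gap, not a proof.

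There is also a genuine hole in your set-theoretic intermediate step, visible already at $d=3$. Bounding the ranks of $\phi_{d-1,1}(f)$ and $\phi_{d-\dtwo,\dtwo}(f)$ by $3$ (equivalently, bounding the relevant values of the Hilbert function of $T/f^\perp$ by $3$) does not by itself place $f$ in $\sigma_3$: one must produce a length-$3$ (smoothable) apolar subscheme, and for $d=3$ the implication fails outright --- a general ternary cubic has Hilbert function $(1,3,3,1)$, so all $4\times4$ minors of $\phi_{2,1}$ vanish trivially, yet its border rank is $4$ since $\sigma_3(v_3(\P^2))$ is the Aronhold hypersurface in $\P^9$. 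This is exactly why the paper handles $d=3$ separately, via the Aronhold invariant for $n=2$ together with symmetric inheritance (Proposition 2.3.1 of \cite{LO}), rather than by catalecticant minors alone; for $d\ge4$ your apolarity reduction is plausible in outline, but the smoothability/apolar-scheme step and the scheme-theoretic comparison of ideals both need the actual arguments of \cite{LO}, which your write-up replaces with an incorrect attribution.
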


Since there is a natural $\SL_{n+1}(\CC)$-group action on $\sigma_3(X)$, we may take the $\SL_{n+1}(\CC)$-orbits inside $\sigma_3(X)$ into consideration for the study of singularity. And we could also regard a canonical representative of each orbit as below. 

First, suppose $f\in\sigma_3(X)\setminus\sigma_2(X)$ is a degenerate form (i.e. $\dim\langle f\rangle=2$). Choose $x_0, x_1$ as a basis of $\langle f\rangle$. Then, we recall the following lemma

\begin{Lem}\label{deg_normal} For any $d\ge4$ and $n\ge1$, any general degenerate form $f\in \sigma_3(v_d(\P V))\setminus\sigma_2(v_d(\P V))$ can be written as $x_0^{d}+\alpha \cdot x_1^{d}+\beta\cdot (x_0+x_1)^{d}$, up to $\SL_{n+1}(\CC)$-action, for some nonzero $\alpha, \beta\in\CC$.
\end{Lem}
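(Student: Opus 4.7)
The plan is to reduce the statement to a problem purely about binary forms and then apply an elementary change-of-coordinates argument inside $\langle x_0,x_1\rangle$. Since $\dim\langle f\rangle=2$, my first step is to use the $\SL_{n+1}(\CC)$-action on $V$ to move $\langle f\rangle$ onto the coordinate 2-plane $\langle x_0,x_1\rangle$; this is possible because $\GL_{n+1}(\CC)$ acts transitively on 2-dimensional subspaces of $V$ and the extra scalar can be absorbed projectively. After this reduction $f$ is a binary form in $x_0,x_1$ and the remaining symmetry group we may use is the stabilizer of $\langle x_0,x_1\rangle$ in $\SL_{n+1}(\CC)$, which, up to projective rescaling, contains the full $\GL_2(\CC)$ acting on $\langle x_0,x_1\rangle$.

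Next I would use the condition $f\in\sigma_3(v_d(\P V))\setminus\sigma_2(v_d(\P V))$, which (combined with $f$ being essentially binary) means $\brank(f)=3$ as a binary form. For binary forms of degree $d\ge 4$ the rank/border-rank dichotomy is classical: a binary form of border rank $3$ has Waring rank either $3$ or $d-1$, and the forms of rank $d-1$ form a proper closed subvariety of the border-rank-$3$ locus. Hence for a \emph{general} degenerate $f$ we may write
\[
 f \;=\; \lambda_1\, l_1^{d} + \lambda_2\, l_2^{d} + \lambda_3\, l_3^{d},
\]
with $l_1,l_2,l_3\in\langle x_0,x_1\rangle$ and, again generically, pairwise non-proportional. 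Since three nonzero vectors in a 2-dimensional space admit a projective basis change sending them to any prescribed triple of distinct points of $\P^1$, I can apply a $\GL_2$ element to normalize $l_1\mapsto x_0$, $l_2\mapsto x_1$, $l_3\mapsto x_0+x_1$; the scalars $\lambda_i$ get modified accordingly, and further rescaling of $x_0,x_1$ (together with projective rescaling of $f$) normalizes the coefficient of $x_0^d$ to $1$, yielding
\[
 f \;=\; x_0^{d} + \alpha\, x_1^{d} + \beta\,(x_0+x_1)^{d}.
\]
The scalars $\alpha,\beta$ must be nonzero, for otherwise $f$ would be a sum of only two $d$-th powers and hence lie in $\sigma_2(v_d(\P V))$.

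The main obstacle I expect is the second step: justifying cleanly that a general degenerate $f$ really has Waring rank $3$ (not $d-1$), so that the decomposition into three $d$-th powers is available. This amounts to the observation that the locus of binary forms of rank $d-1$ is a proper subvariety of the binary $\sigma_3$, which follows from the classical apolarity/catalecticant description of binary forms (e.g.\ \cite[thm.\ 1.45]{IK}); any careful proof will need to cite or redo this structural fact. The rest of the argument is a routine linear-algebra normalization.
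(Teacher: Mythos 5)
Your proposal is correct and follows essentially the same route as the paper: reduce to the binary case on $U=\langle f\rangle$, invoke the classical Comas--Seiguer/Iarrobino--Kanev structure of binary forms to conclude that a \emph{general} degenerate $f$ of border rank $3$ has Waring rank $3$, and then normalize the three pairwise non-proportional linear forms to $x_0,x_1,x_0+x_1$ by a change of coordinates, absorbing scalars into $\alpha,\beta\neq 0$. The only point the paper makes explicit that you merely assert is that $\brank(f)\le 3$ already holds for $f$ viewed as a binary form: the paper checks this by noting that the catalecticants of $f$ over $U$ are submatrices of those over $V$, so their $4\times 4$ minors vanish and hence $f\in\sigma_3(v_d(\P U))$.
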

\begin{proof} Since $\dim\langle f\rangle=2$, let $U:=\langle f\rangle=\CC\langle x_0,x_1\rangle$, a subspace of $V$. For such a $f\in \sigma_3(v_d(\P V))\setminus\sigma_2(v_d(\P V))$, it is easy to see that \[3=\brank(f)\le\brank(f,U)~,\] where the latter is the border rank of $f$ being considered as a polynomial in $S^\bullet U$. On the other hand, we also have $\brank(f,U)\le\brank(f)=3$, because $U\subset V$ implies that $\sigma_3(v_d(\mathbb{P} U))$ is contained in $\sigma_3(v_d(\mathbb{P} V))$. Since $\rank(f,U)$ and $\brank(f,U)$ coincide for a \ti{general} $f$ in the rational normal curve case (see e.g. \cite{CG}), we have $\rank(f,U)=3$. Thus, for some nonzero $\lambda,\mu\in\CC$ we can write $f$ as
\begin{align*}
f(x_0,x_1)&=(a_0 x_0+a_1 x_1)^{d}+(b_0 x_0+b_1 x_1)^d +\{\lambda(a_0 x_0+a_1 x_1)+\mu(b_0 x_0+b_1 x_1)\}^{d}\\
&=X_0^{d}+(\frac{\lambda}{\mu})^d\cdot X_1^d+\lambda^d\cdot(X_0+X_1)^d~,
\end{align*}
by some scaling and using a $\SL_{n+1}(\CC)$-change of coordinates, which proves our assertion.\end{proof}

\begin{Remk}\label{d<=3} There are some remarks related to Lemma \ref{deg_normal} as follows:
\begin{itemize}
\item[(a)] Note that there does not exist a degenerate form corresponding to an orbit in $\sigma_3(v_d(\P V))\setminus\sigma_2(v_d(\P V))$ if $d\le 3$. In this case, if $f$ is degenerate, then $f$ always belongs to $\sigma_2(v_d(\P V))$, for the $\phi_{d-1,1}(f)$ have at most two nonzero rows and all the $3\times3$-minors of $\phi_{d-1,1}(f)$ vanish.
\item[(b)] In fact, in $d=4$ case, Lemma \ref{deg_normal} holds for \ti{all} degenerate form $f\in\sigma_3(v_4(\P V))\setminus\sigma_2(v_4(\P V))$, because there exist only $\rank~3$ forms in $\sigma_3(v_4(\P^1))\setminus\sigma_2(v_4(\P^1))$ (see \cite{CG} and also \cite[Section 4]{LT}).
\end{itemize}
\end{Remk}

Now, let's put main types of canonical representatives for $\SL_{n+1}(\CC)$-orbits together as follows:

\begin{Thm}\label{normal_form} There are 4 types of homogeneous forms representing $\SL_{n+1}(\CC)$-orbits in $\sigma_3(v_d(\P V))\setminus\sigma_2(v_d(\P V))$; 
\begin{enumerate}
\item $x_0^{d}+x_1^{d}+x_2^{d}$
\item $x_0^{d-1}x_1+x_2^{d}$
\item $x_0^{d-2}x_1^{2}+x_0^{d-1}x_2$
\item $x_0^{d}+\alpha x_1^{d}+\beta(x_0+x_1)^{d}$ (for some nonzero $\alpha, \beta\in\CC$)~.
\end{enumerate}
The first three types correspond to all the three non-degenerate orbits. And the last `binary' type corresponds to a general point of $\mathcal{D}$, the locus of all the degenerate forms, which appears only if $d\ge4$.
\end{Thm}
\begin{proof}
It is straightforward from \cite[Theorem 10.2]{LT}, Lemma \ref{deg_normal} and Remark \ref{d<=3}.
\end{proof}

Let us introduce more basic terms and facts. Let $Z\subset\P W$ be a variety and $\hat{Z}$ be its affine cone in $W$. Consider a (closed) point $p\in \hat{Z}$ and say $[p]$ the corresponding point in $\P W$. We denote the \ti{affine tangent space to $Z$ at $[p]$} in $W$ by $\hat{T}_{[p]}Z$ and we define the  \defi{(affine) conormal space to $Z$ at $[p]$}, $\hat{N}^{\vee}_{[p]}Z$ as the annihilator $(\hat{T}_{[p]}Z)^{\perp}\subset W^{\vee}$. Since $\dim \hat{N}^{\vee}_{[p]}Z+\dim\hat{T}_{[p]}Z=\dim W$ and $\dim Z\le \dim \hat{T}_{[p]}Z-1$, we get that $\dim \hat{N}^{\vee}_{[p]}Z\le \codim(Z,\P W)$ and the equality holds if and only if $Z$ is smooth at $[p]$. This conormal space is quite useful to study the tangent space of $Z$.

Let us recall the \defi{apolar ideal} $f^{\perp}\subset T$. For any given form $f\in S^d V$, we call $\partial\in T_t$ \ti{apolar} to $f$ if the differentiation $\partial(f)$ gives zero (i.e. $\partial\in\ker\phi_{t,d-t}(f)$). And we define the \defi{apolar ideal} $f^{\perp}\subset T$ as
\[f^\perp:=\{\partial\in T~|~\partial(f)=0\}~.\]
It is straightforward to see that $f^\perp$ is indeed an ideal of $T$. Moreover, it is well-known that the quotient ring $T_f:=T/f^\perp$ is an \ti{Artinian Gorenstein algebra with socle degree $d$} (see e.g. \cite{IK}).

In our case, we have a nice description of the conormal space in terms of this apolar ideal as follows:

\begin{Prop}\label{conormal_prop} Let $X$ be the $n$-dimensional Veronese variety $v_d(\P V)$ as above and $f$ be any form in $S^d V$. Suppose that $f$ corresponds to a (closed) point of $\sigma_3(X)\setminus\sigma_2(X)$ and that $\rank~\phi_{d-1,1}(f)=3,~\rank~\phi_{d-\dtwo,\dtwo}(f)=3$. Then, for any $(d,n)$ with $d\ge4, n\ge2$ we have
\begin{equation}\label{conormal1}
\hat{N}^{\vee}_{f}\sigma_3(X)=(f^\perp)_1\cdot(f^\perp)_{d-1}+(f^\perp)_{\dtwo}\cdot(f^\perp)_{d-\dtwo}~,
\end{equation}
where the sum is taken as a $\CC$-subspace in $T_d={S^d V}^{\vee}$.
\end{Prop}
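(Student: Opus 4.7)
The plan is to read the conormal space directly off the scheme-theoretic defining equations of $\sigma_{3}(X)$. By Proposition~\ref{eqn_s3} the affine cone over $\sigma_{3}(X)$ is cut out scheme-theoretically by the $4\times 4$-minors of the two catalecticants $\phi_{d-1,1}(F)$ and $\phi_{d-\dtwo,\dtwo}(F)$, whose entries $a_{I+J}$ are (up to scalars) coordinate functions on $S^{d}V$. Hence $\hat{N}^{\vee}_{f}\sigma_{3}(X)\subset T_{d}$ is the $\CC$-span of the differentials at $f$ of all those minors, and the computation splits as a sum of the contributions coming from the two flattenings.

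For a single flattening $\phi:=\phi_{t,d-t}$ the computation is the classical one for generic determinantal loci. At the rank-$3$ point $M:=\phi(f)$, the span of the differentials at $f$ of the $4\times 4$-minors of $\phi_{t,d-t}(F)$ equals the image of the natural map
\[\ker M\otimes(\coker M)^{\vee}\longrightarrow T_{d}.\]
The two tensor factors are now identified with pieces of the apolar ideal: by definition $\ker M=(f^{\perp})_{t}\subset T_{t}$, and under the perfect pairing $S_{d-t}\times T_{d-t}\to\CC$ one has $(\coker M)^{\vee}=(\im M)^{\perp}=\ker\phi_{d-t,t}(f)=(f^{\perp})_{d-t}\subset T_{d-t}$. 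A direct inspection of the above map on a decomposable tensor $\alpha\otimes\beta$ with $\alpha\in(f^{\perp})_{t}$ and $\beta\in(f^{\perp})_{d-t}$ shows that its image in $T_{d}$ is simply the product $\alpha\beta$; so the contribution of this flattening to the conormal space is exactly the $\CC$-subspace $(f^{\perp})_{t}\cdot(f^{\perp})_{d-t}\subset T_{d}$.

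Summing the contributions for $(t,d-t)=(d-1,1)$ and $(t,d-t)=(d-\dtwo,\dtwo)$ yields the asserted equality \eqref{conormal1}. The hypotheses $\rank\phi_{d-1,1}(f)=\rank\phi_{d-\dtwo,\dtwo}(f)=3$ are precisely what is needed to place $f$ on the smooth stratum of each of the two generic determinantal loci, so that the conormal description above genuinely applies.

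I expect the main obstacle to be the bookkeeping of dualities: checking that under the apolar pairing the cokernel of $\phi_{t,d-t}(f)$ really is canonically identified with $(f^{\perp})_{d-t}$, and that the ``differential of a $4\times 4$-minor'' map is precisely multiplication in $T=S^{\bullet}V^{\vee}$. This is a straightforward but delicate tensorial calculation using that the entries of the generic catalecticant are $a_{I+J}$, so that the partial derivative of a $4\times 4$-minor with respect to $a_{K}$ collects exactly the contributions indexed by decompositions $I+J=K$; this is precisely the rule for multiplying two homogeneous elements of $T$ in the monomial basis \eqref{bases}.
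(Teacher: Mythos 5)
Your proposal is correct and takes essentially the same route as the paper: the paper's proof consists of the identifications $\ker\phi_{d-k,k}(f)=(f^\perp)_{d-k}$ and $(\im~\phi_{d-k,k}(f))^{\perp}=(f^\perp)_{k}$ followed by a citation of \cite[Prop.~2.5.1]{LO} for precisely the conormal formula that you re-derive by hand from the scheme-theoretic equations of Proposition~\ref{eqn_s3}, the classical description of the conormal space of a determinantal variety at a point of maximal rank, and the observation that the dual of the catalecticant map is multiplication in $T$ in the bases \eqref{bases}. The only difference is that you prove the quoted lemma (in this special case) rather than citing it, including the correct use of the exact-rank hypotheses to ensure the differentials of the $4\times4$-minors span the full $\ker\otimes(\im)^{\perp}$; the substance is the same.
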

\begin{proof}
First, recall that $\phi_{d-k,k}(f)^{T}=\phi_{k,d-k}(f)$. We also note that 
\[\ker \phi_{d-k,k}(f)=(f)^\perp_{d-k}\quad\trm{and}\quad(\im~\phi_{d-k,k}(f))^\perp=\ker (\phi_{d-k,k}(f)^{T})=\ker \phi_{k,d-k}(f)=(f)^\perp_{k}~.\]
Whenever $\rank~\phi_{d-1,1}(f)=3$ and $\rank~\phi_{d-\dtwo,\dtwo}(f)=3$, we have
\begin{equation}\label{conormal}
\hat{N}^{\vee}_{f}\sigma_3(X)=\langle\ker \phi_{d-1,1}(f)\cdot(\im~\phi_{d-1,1}(f))^{\perp}\rangle+\langle\ker \phi_{d-\dtwo,\dtwo}(f)\cdot(\im~\phi_{d-\dtwo,\dtwo}(f))^{\perp}\rangle
\end{equation}
(see \cite[Proposition 2.5.1]{LO}), which proves the proposition.
\end{proof}

\begin{Remk}\label{deg_conormal} Note that, in case of $n=2$ or $\dim\langle f\rangle=2$ (i.e. degenerate form), to compute conormal space $\hat{N}^{\vee}_{f}\sigma_3(X)$ we only need to consider the symmetric flattening $\phi_{d-\dtwo,\dtwo}$ so that we have 
\begin{equation}\label{conormal2}
\hat{N}^{\vee}_{f}\sigma_3(X)=(f^\perp)_{\dtwo}\cdot(f^\perp)_{d-\dtwo}~.
\end{equation}
For $n=2$ case, $\phi_{d-1,1}(f)$ has only 3 rows, there is no non-trivial $4\times4$-minor to give a local equation of $\sigma_3(X)$ at $f$. In case of $\dim\langle f\rangle=2$, we may consider $f\in\CC[x_0,x_1]_d$ and choose bases as (\ref{bases}). Then, we could write the matrix of $\phi_{d-1,1}$ and its evaluation at $f$, $\phi_{d-1,1}(f)$ as
\begin{align*}
\phi_{d-1,1}=\left(\begin{array}{c:cccccc}
&y_0^{d-1}&y_0^{d-2}y_1&\cdots&y_n^{d-1}\\ \hdashline
x_0&&&&\\
x_1&&&&\\
x_2&&a_I&&\\
\vdots&&&\\
x_n&&&
\end{array}\right)~,&\quad\phi_{d-1,1}(f)=
\left(\begin{array}{c:cccccc}
&y_0^{d-1}&y_0^{d-2}y_1&\cdots&y_n^{d-1}\\ \hdashline
x_0&\ast&\ast&\cdots&\ast\\
x_1&\ast&\ast&\cdots&\ast\\
x_2&0&0&\cdots&0\\
\vdots&\vdots&\vdots&\vdots&\vdots\\
x_n&0&0&\cdots&0
\end{array}\right)~.
\end{align*}
So, each $4\times4$-minor of $\phi_{d-1,1}$ (say $D_4(\phi_{d-1,1})$) has at most rank $2$ at $f$. Hence, we see that all the partial derivatives in the Jacobian
\[\frac{\partial D_4(\phi_{d-1,1})}{\partial a_I}(f)=0\]
for each index $I$ with $|I|=d$ and $D_4(\phi_{d-1,1})$ doesn't contribute to span the conormal space of $\sigma_3(X)$ at $f$, because at least one row of $D_4(\phi_{d-1,1})$ (say $(a_I~a_J~a_K~a_L)$) vanishes at $f$ and the Laplace expansion of $D_4(\phi_{d-1,1})$ along this row
\[D_4(\phi_{d-1,1})=\pm\bigg(a_I\cdot D^I_3(\phi_{d-1,1})-a_J\cdot D^J_3(\phi_{d-1,1})+a_K\cdot D^K_3(\phi_{d-1,1})-a_L\cdot D^L_3(\phi_{d-1,1})\bigg)\]
guarantees all the partials of $D_4(\phi_{d-1,1})$ become zero at $f$ as follows: for example, we see that
\begin{align*}
\pm\frac{\partial D_4(\phi_{d-1,1})}{\partial a_I}(f)=&~ D^I_3(\phi_{d-1,1})(f)+a_I(f)\cdot\frac{\partial D^I_3(\phi_{d-1,1})}{\partial a_I}(f)-a_J(f)\cdot\frac{\partial D^J_3(\phi_{d-1,1})}{\partial a_I}(f)\\
&+a_K(f)\cdot\frac{\partial D^K_3(\phi_{d-1,1})}{\partial a_I}(f)-a_L(f)\cdot\frac{\partial D^L_3(\phi_{d-1,1})}{\partial a_I}(f)~=~0~,
\end{align*}
\end{Remk}
where $a_I(f)=a_J(f)=a_K(f)=a_L(f)=0$ and $D^I_3(\phi_{d-1,1})(f)=0$ because of $\rank~D^I_3(\phi_{d-1,1})$ is at most 2 at $f$.

\subsection{Outline for the proof of main theorem}

In this subsection we outline the proof of our main theorem (Theorem~\ref{sing3vero}).

For the locus of non-degenerate orbits in $\sigma_3(X)\setminus\sigma_2(X)$, we may consider a useful reduction method through the following arguments:

\begin{Lem}\label{span_lem} For every $f\in \sigma_3(v_d(\P^n))$ ($d,n\ge 2$), there exists a linear
$\P^2=\P U\subset\P^n=\P V$ such that $f\in \sigma_3(v_d(\P U))$. In particular, for every $f\in \sigma_3(v_d(\P^n))\setminus\sigma_2(v_d(\P^n))$, $2\le\dim\langle f\rangle\le 3$. 
\end{Lem}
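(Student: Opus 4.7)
The plan is to take $U \subseteq V$ to be any $3$-dimensional subspace containing the span $\langle f \rangle$, and to verify $f \in \sigma_3(v_d(\P U))$ by means of the symmetric-flattening equations from Proposition~\ref{eqn_s3}; once this is done, the stated bounds on $\dim\langle f \rangle$ fall out quickly. The first step is to show $\dim\langle f\rangle \le 3$ for every $f \in \sigma_3(v_d(\P V))$. Unraveling the definition, $\langle f\rangle = (\ker\phi_{1,d-1}(f))^{\perp} \subseteq V$, so $\dim\langle f\rangle = \rank\phi_{1,d-1}(f)$. A form of Waring rank at most $3$ has $\phi_{1,d-1}$ of rank at most $3$ by the subadditivity observation recorded just before Proposition~\ref{eqn_s3}, and the rank-$\le 3$ locus of matrices is Zariski closed, so passage to the limit gives $\rank\phi_{1,d-1}(f) \le 3$ for every $f \in \sigma_3(v_d(\P V))$.

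Because $n \ge 2$, one has $\dim V \ge 3$, so I may enlarge $\langle f\rangle$ to a $3$-dimensional subspace $U \subseteq V$; by the definition of the span, $f \in S^d\langle f\rangle \subseteq S^d U$. To see $f \in \sigma_3(v_d(\P U))$, I invoke Proposition~\ref{eqn_s3} applied to $\P U = \P^2$: the variety $\sigma_3(v_d(\P U))$ is cut out scheme-theoretically by the $4\times 4$ minors of $\phi_{d-1,1}(-,U)$ and $\phi_{d-\dtwo,\dtwo}(-,U)$. When $f \in S^d U$, both $U$-flattenings appear as submatrices of the corresponding $V$-flattenings $\phi_{d-1,1}(f,V)$ and $\phi_{d-\dtwo,\dtwo}(f,V)$ -- this is precisely the ``symmetric inheritance'' already exploited inside the proof of Lemma~\ref{deg_normal}. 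Since $f \in \sigma_3(v_d(\P V))$, all $4\times 4$ minors of the $V$-flattenings vanish; hence so do those of the $U$-flattenings, giving $f \in \sigma_3(v_d(\P U))$. (The edge case $d=2$ needs no argument: $\sigma_3(v_2(\P V))$ is cut out by $4\times 4$ minors of the single symmetric matrix of the quadric, and the analogous submatrix observation applies.)

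For the ``in particular'' statement, the upper bound is what was just proved. For the lower bound, if $\dim\langle f\rangle \le 1$ then $f$ is either $0$ or a scalar multiple of $\ell^d$ for a single linear form $\ell$, so $\rank(f)\le 1$, placing $f$ in $v_d(\P V) \subseteq \sigma_2(v_d(\P V))$ and contradicting $f \notin \sigma_2(v_d(\P V))$. The only mildly delicate step is the inheritance claim; once one checks explicitly that each $4\times 4$ minor of a $U$-flattening is also a $4\times 4$ minor of the corresponding $V$-flattening, everything else is bookkeeping.
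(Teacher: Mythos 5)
Your proposal is correct and follows essentially the same route as the paper: the paper's (very terse) proof likewise identifies $\langle f\rangle$ with the image of the degree-one flattening, bounds its dimension by $3$ via the rank of $\phi_{d-1,1}(f)$, and takes $U\supseteq\langle f\rangle$. The submatrix/inheritance verification that $f\in\sigma_3(v_d(\P U))$, which you spell out via Proposition~\ref{eqn_s3}, is exactly the argument the paper itself uses inside the proof of Lemma~\ref{deg_normal}, so you have merely made explicit what the paper leaves implicit.
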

\begin{proof}
It is enough to show that $\dim\langle f\rangle\le 3$. When $f\in\sigma_3(v_d(\P^n))$ (i.e. border rank $\le 3$), the image of the flattening
$\phi_{d-1,1} : {S^{d-1}\CC^{n+1}}^{\vee}\to\CC^{n+1}$ has dimension $\le 3$ and it is contained in the required 3-dimensional subspace $U$, i.e. $\dim\langle f\rangle\le 3$.
\end{proof}

Recall that we denote the locus of degenerate forms in $\sigma_3(X)\setminus\sigma_2(X)$ by $\mathcal{D}$ (see the paragraph before Theorem \ref{sing3vero} for notation). Then, by Lemma \ref{span_lem}, we have an obvious corollary for non-degnerate orbits as follows:

\begin{Coro}\label{coro_span} For each $f\in\sigma_3(v_d(\P^n))\setminus\left(\mathcal{D}\cup\sigma_2(v_d(\P^n))\right)$, There exists a unique 3-dimensional subspace $U$ such that $f\in \sigma_3(v_d(\P U))$.
\end{Coro}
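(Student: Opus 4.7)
The plan is to show that $U := \langle f\rangle$ does the job, and that it is forced by the intrinsic definition of $\langle f\rangle$. First, since $f\notin D$, the form $f$ is non-degenerate, so $\dim\langle f\rangle\neq 2$; Lemma \ref{span_lem} then forces $\dim\langle f\rangle = 3$. Setting $U=\langle f\rangle$, one has $f\in S^d U$ by the very definition of the span, so $U$ is a 3-dimensional candidate.

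Next, I would verify that $f\in\sigma_3(v_d(\P U))$ by applying Proposition \ref{eqn_s3} to $U$. The idea is that the flattening construction behaves well under the inclusion $U\hookrightarrow V$: choosing a basis of $V$ extending a basis of $U$, the hypothesis $f\in S^d U$ means that all coefficients $a_I$ of $f$ involving variables outside $U$ vanish. Consequently, the only nonzero entries of both symmetric flattenings $\phi_{d-1,1}(f)$ and $\phi_{d-\dtwo,\dtwo}(f)$ sit inside the submatrices indexed by monomials supported on $U$, and these submatrices are precisely the corresponding flattenings of $f$ viewed as an element of $S^d U$. Since $f\in\sigma_3(v_d(\P V))$, Proposition \ref{eqn_s3} says all $4\times 4$ minors of the $V$-flattenings vanish at $f$; these contain all $4\times 4$ minors of the $U$-flattenings, hence the latter also vanish, and Proposition \ref{eqn_s3} applied with $U$ in place of $V$ gives $f\in\sigma_3(v_d(\P U))$.

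For uniqueness, suppose $U'\subset V$ is any 3-dimensional subspace with $f\in\sigma_3(v_d(\P U'))$. In particular $f\in S^d U'$, so by the minimality property of the span (it is by definition the smallest subspace of $V$ whose $d$-th symmetric power contains $f$), one has $\langle f\rangle\subseteq U'$. Since $\dim\langle f\rangle=3=\dim U'$, the inclusion is an equality and $U'=\langle f\rangle=U$.

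I do not anticipate a substantive obstacle: the proof is essentially a bookkeeping consequence of Lemma \ref{span_lem}, the intrinsic characterization of $\langle f\rangle$, and the functoriality of symmetric flattenings under linear inclusions. The non-degeneracy hypothesis $f\notin D$ is used exactly once, to rule out $\dim\langle f\rangle=2$; if $\dim\langle f\rangle=2$ then any 3-dimensional subspace of $V$ containing $\langle f\rangle$ would satisfy the conclusion, which is precisely why uniqueness can fail in the degenerate locus $D$.
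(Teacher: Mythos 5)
Your proof is correct and in substance the same as the paper's: both arguments identify $U=\langle f\rangle$, observe that non-degeneracy together with the bound $2\le\dim\langle f\rangle\le 3$ of Lemma \ref{span_lem} forces $\dim\langle f\rangle=3$, and conclude that $U$ is uniquely determined. The only difference is presentational: the paper gets $\dim\langle f\rangle=3$ by citing the orbit normal forms of Theorem \ref{normal_form}, whereas you deduce it directly from the definition of $D$ and Lemma \ref{span_lem}, and you make explicit the existence step (the flattenings of $f$ on $U$ are submatrices of those on $V$, so Proposition \ref{eqn_s3} applies with $U$ in place of $V$) and the uniqueness step ($f\in\sigma_3(v_d(\P U'))$ forces $\langle f\rangle\subseteq U'$), both of which the paper leaves implicit.
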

\begin{proof} For any non-degenerate form $f$, which correspond to three orbits in Theorem \ref{normal_form}, the dimension of $\langle f\rangle$ is exactly $3$ and the subspace $U=\langle f\rangle$ is uniquely determined as the image of the flattening $\phi_{d-1,1}$.
\end{proof}

For the proof of the main theorem, we treat the case of non-degenerate forms and the case of degenerate ones separately:

\begin{proof}[Proof of Theorem~\ref{sing3vero}]
Let our $\P^n=\P V$ with $V=\CC\langle x_0,x_1,\cdots,x_n\rangle$ and its dual $V^{\vee}=\CC\langle y_0,y_1,\cdots,y_n\rangle$. First, for the locus of non-degenerate forms, we claim that one may reduce the problem to the case of $n=2$. Construct the following map
$$\sigma_3(v_d(\PP^n))\setminus\left(\mathcal{D}\cup\sigma_2(v_d(\PP^n))\right)\st{\pi}\mathbb{G}(2,\PP^n)~.$$
This map is well defined by Corollary \ref{coro_span} and for each 2-dimensional $\PP U\subset\PP^n$, the fiber $\pi^{-1}(\PP U)$ is isomorphic to
$\sigma_3(v_d(\PP U))\setminus\left(\mathcal{D}\cup\sigma_2(v_d(\PP U))\right)$. So, if we prove our theorem for the case $n=2$, then the fibers of $\pi$ are all isomorphic and smooth. Hence $\pi$ becomes a fibration over a smooth variety with smooth fibers. This shows that its domain
$\sigma_3(v_d(\PP^n))\setminus\left(\mathcal{D}\cup\sigma_2(v_d(\PP^n))\right)$ is smooth, so proving our assertion.

Thus, in subsection \ref{sect_n=2} we investigate the non-degenerate orbits with condition $n=2$ and prove that there are no more singularity than $\sigma_2(v_d(\PP^n))$ by Corollary \ref{thm_d=3} ($d=3$) and Theorem \ref{thm_nondeg} ($d\ge4$). 

For the locus of degenerate forms $\mathcal{D}$, in subsection \ref{sect_deg} we directly compute the dimension of conormal space  $\hat{N}^{\vee}\sigma_3(v_d(\PP^n))$ using Proposition \ref{conormal_prop} and show that $\mathcal{D}$ happens to be the extra singular locus only when $d=4,~n\ge3$ (see Theorem \ref{thm_deg}).
\end{proof}	
\bigskip

\subsection{Non-degenerate orbits : $n=2$ case}\label{sect_n=2}

\subsubsection{Aronhold case ($d=3$)}\label{sect_Aronhold} Here, we settle the equality in $\Sing(\sigma_3(v_d(\P V)))\supseteq\sigma_{2}(v_d(\P V))$ in our first case $d=3,~\dim V=3$ (i.e. $n=2$). Note that the equation for the hypersurface $\sigma_3(v_3(\P^2))\subset \P^9$ is given by 
the classical \textit{Aronhold invariant} (see e.g. \cite{Ott, LO}). Map $S^3V\rightarrow (V\otimes \Lambda^2 V)\otimes (V\otimes V^*)$, by
first embedding $S^3V\subset V\otimes V\otimes V$, then tensoring with $Id_V\in V\otimes V^*$, and then skew-symmetrizing.
Thus, $F\in S^3V$ gives rise to an element of $\CC^9\otimes \CC^9$. In suitable bases, if we write
\begin{align*}
F= &\phi_{000}x_0^3+\phi_{111}x_1^3+\phi_{222}x_2^3+3\phi_{001}x_0^2x_1+3\phi_{011}x_0x_1^2+3\phi_{002}x_0^2x_2\\
&+3\phi_{022}x_0x_2^2
+3\phi_{112}x_1^2x_2+3\phi_{122}x_1x_2^2+6\phi_{012}x_0x_1x_2,
\end{align*} 
then the corresponding skew-symmetric matrix is:
$$\left[\begin{array}{ccccccccc}
&&&\phi_{002}&\phi_{012}&\phi_{022}&-\phi_{001}&-\phi_{011}&-\phi_{012}\\
&&&\phi_{012}&\phi_{112}&\phi_{122}&-\phi_{011}&-\phi_{111}&-\phi_{112}\\
&&&\phi_{022}&\phi_{122}&\phi_{222}&-\phi_{012}&-\phi_{112}&-\phi_{122}\\
-\phi_{002}&-\phi_{012}&-\phi_{022}&&&&\phi_{000}&\phi_{001}&\phi_{002}\\
-\phi_{012}&-\phi_{112}&-\phi_{122}&&&&\phi_{001}&\phi_{011}&\phi_{012}\\
-\phi_{022}&-\phi_{122}&-\phi_{222}&&&&\phi_{002}&\phi_{012}&\phi_{022}\\
\phi_{001}&\phi_{011}&\phi_{012}&-\phi_{000}&-\phi_{001}&-\phi_{002}&&&\\
\phi_{011}&\phi_{111}&\phi_{112}&-\phi_{001}&-\phi_{011}&-\phi_{012}&&&\\
\phi_{012}&\phi_{112}&\phi_{122}&-\phi_{002}&-\phi_{012}&-\phi_{022}&&&\\
\end{array}\right].
$$
All the principal Pfaffians of size $8$ of the this matrix coincide with one another (up to scaling), this quartic equation gives the classical Aronhold invariant $A(F)$ as follows:
\begin{align*}
A(F)&=\p{012}^4-2\p{011}\p{012}^2\p{022}+\p{011}^2\p{022}^2+\p{002}\p{012}\p{022}\p{111}-\p{001}\p{022}^2\p{111}-2\p{002}\p{012}^2\p{112}\\
&-\p{002}\p{011}\p{022}\p{112}+3\p{001}\p{012}\p{022}\p{112}+\p{002}^2\p{112}^2-\p{000}\p{022}\p{112}^2+3\p{002}\p{011}\p{012}\p{122}\\
&-2\p{001}\p{012}^2\p{122}-\p{001}\p{011}\p{022}\p{122}-\p{002}^2\p{111}\p{122}+\p{000}\p{022}\p{111}\p{122}-\p{001}\p{002}\p{112}\p{122}\\
&+\p{000}\p{012}\p{112}\p{122}+\p{001}^2\p{122}^2-\p{000}\p{011}\p{122}^2-\p{002}\p{011}^2\p{222}+\p{001}\p{011}\p{012}\p{222}\\
&+\p{001}\p{002}\p{111}\p{222}-\p{000}\p{012}\p{111}\p{222}-\p{001}^2\p{112}\p{222}+\p{000}\p{011}\p{112}\p{222}~.
\end{align*}
\begin{Thm}\label{singAro}
The singular locus $\Sing(\sigma_3(v_3(\P^2)))$ coincides with $\sigma_{2}(v_3(\P^2))$ set-theoretically.
\end{Thm}
\begin{proof}
We know $\Sing(\sigma_3(v_3(\P^2)))\supseteq\sigma_{2}(v_3(\P^2))$. It is also well-known that the defining equations of $\sigma_{2}(v_3(\P^2))$ are given by $3$-minors of 3 by 6 catalecticant matrix $\phi_{2,1}$ (e.g. \cite{Kan}), which are ${6\choose 3}=20$ cubics cutting out degree 15 and codimension 4 variety.

On the other hand, the Jacobian of $A(F)$ gives 10 cubic equations, which cut out the singular locus $\Sing(\sigma_3(v_3(\P^2)))$, such as

\begin{align*}
g_1=& \p{011}\p{112}\p{222}-\p{011}\p{122}^2-\p{012}\p{111}\p{222}+\p{012}\p{112}\p{122}+\p{022}\p{111}\p{122}-\p{022}\p{112}^2\\ 
g_2=&-2\p{001}\p{112}\p{222}+2\p{001}\p{122}^2+\p{002}\p{111}\p{222}-\p{002}\p{112}\p{122}+\p{011}\p{012}\p{222}-\p{011}\p{022}\p{122}\\
&-2\p{012}^2\p{122}+3\p{012}\p{022}\p{112}-\p{022}^2\p{111}\\
   g_3=& \p{001}\p{111}\p{222}-\p{001}\p{112}\p{122}-2\p{002}\p{111}\p{122}+2\p{002}\p{112}^2-\p{011}^2\p{222}+3\p{011}\p{012}\p{122}\\
   &-\p{011}\p{022}\p{112}-2\p{012}^2\p{112}+\p{012}\p{022}\p{111}\\  
\end{align*}   
\begin{align*}   
  g_4=& \p{000}\p{112}\p{222}-\p{000}\p{122}^2+\p{001}\p{012}\p{222}-\p{001}\p{022}\p{122}-2\p{002}\p{011}\p{222}+3\p{002}\p{012}\p{122}\\
   &-\p{002}\p{022}\p{112}+2\p{011}\p{022}^2-2\p{012}^2\p{022}\\
   g_5 = & \p{000}\p{111}\p{222}-\p{000}\p{112}\p{122}-\p{001}\p{011}\p{222}+4\p{001}\p{012}\p{122}-3\p{001}\p{022}\p{112}-3\p{002}\p{011}\p{122}\\
   &4\p{002}\p{012}\p{112}-\p{002}\p{022}\p{111}+4\p{011}\p{012}\p{022}-4\p{012}^3\\
    g_6 =& \p{000}\p{111}\p{122}-\p{000}\p{112}^2-\p{001}\p{011}\p{122}+3\p{001}\p{012}\p{112}-2\p{001}\p{022}\p{111}-\p{002}\p{011}\p{112}\\
    &+\p{002}\p{012}\p{111}+2\p{011}^2\p{022}-2\p{011}\p{012}^2\\
     g_7=&  \p{000}\p{012}\p{222}-\p{000}\p{022}\p{122}-\p{001}\p{002}\p{222}+\p{001}\p{022}^2+\p{002}^2\p{122}-\p{002}\p{012}\p{022}\\
     g_8=&   \p{000}\p{011}\p{222}+\p{000}\p{012}\p{122}-2\p{000}\p{022}\p{112}-\p{001}^2\p{222}-\p{001}\p{002}\p{122}+3\p{001}\p{012}\p{022}\\
     &+2\p{002}^2\p{112}-\p{002}\p{011}\p{022}-2\p{002}\p{012}^2\\
  g_9  =& -2\p{000}\p{011}\p{122}+\p{000}\p{012}\p{112}+\p{000}\p{022}\p{111}+2\p{001}^2\p{122}-\p{001}\p{002}\p{112}-\p{001}\p{011}\p{022}\\
  &-2\p{001}\p{012}^2-\p{002}^2\p{111}+3\p{002}\p{011}\p{012}\\
   g_{10}= & \p{000}\p{011}\p{112}-\p{000}\p{012}\p{111}-\p{001}^2\p{112}+\p{001}\p{002}\p{111}+\p{001}\p{011}\p{012}-\p{002}\p{011}^2 \quad.
\end{align*}

One can compute the Hilbert polynomial of the singular locus by these ten cubics (e.g. \cite{M2}) as follows:
$$\mathrm{H}(t)=\frac{15}{5!}t^5+\frac{15}{8}t^4-\frac{53}{24}t^3+\frac{81}{8}t^2-\frac{23}{12}t+2~,$$
which shows that $\Sing(\sigma_3(v_3(\P^2)))$ has also codimension 4 in $\P^9$ and degree 15. This gives the equality $\Sing(\sigma_3(v_3(\P^2)))=\sigma_{2}(v_3(\P^2))$ as a set.
\end{proof}

Thus, we also have an immediate corollary as follows:
\begin{Coro}[$d=3$ case]\label{thm_d=3} For every $n\ge 2$ and $d=3$, 
$\sigma_3(v_3(\P^n))\setminus\sigma_2(v_3(\P^n))$ is smooth.
\end{Coro}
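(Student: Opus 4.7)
The strategy is to compute the Zariski (affine) tangent space at $f\in\sigma_3(v_3(\P V))\setminus\sigma_2(v_3(\P V))$ after a reduction by the $\SL_{n+1}(\CC)$-action. By Remark \ref{d<=3}(a), no degenerate forms occur in $\sigma_3\setminus\sigma_2$ when $d=3$, so Lemma \ref{span_lem} forces $\dim\langle f\rangle=3$. I may therefore assume $W:=\langle f\rangle=\langle x_0,x_1,x_2\rangle$, and fix the splitting $V=W\oplus W'$ with $W'=\langle x_3,\ldots,x_n\rangle$. By the $n=2,\,d=3$ (Aronhold) case, $f$ is a smooth point of the Aronhold hypersurface $\sigma_3(v_3(\P W))$.

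For the lower bound on $\dim\hat{T}_f\sigma_3(v_3(\P V))$ I exhibit two families of tangent directions sitting in complementary summands of $S^3 V$: (i) the fiber family $\hat{T}_f\sigma_3(v_3(\P W))\subset S^3 W$ of affine dimension $9$; (ii) Grassmannian deformations $\varphi\in T_W\Gr(3,V)=\Hom(W,V/W)$ producing $\sum_{i=0}^{2}\varphi(x_i)\partial_{x_i}f\in S^2 W\cdot W'$, and since $\partial_{x_0}f,\partial_{x_1}f,\partial_{x_2}f$ are linearly independent (from $\langle f\rangle=W$), this family has dimension $3(n-2)$. Their sum has dimension $9+3(n-2)=3n+3$, the expected affine dimension of $\sigma_3(v_3(\P V))$.

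For the upper bound I compute $\hat{N}^\vee_f\sigma_3(v_3(\P V))\subset T_3$ from the two kinds of defining equations of $\sigma_3(v_3(\P V))$ (cf.\ Proposition \ref{eqn_s3} together with the symmetric inheritance used in its $d=3$ case): the $4\times 4$-minors of $\phi_{2,1}(F)$, and the Aronhold invariant $\mathrm{Aron}(F|_U)$ inherited from each $3$-dimensional $U\subset V$. The minors contribute $(f^\perp)_1\cdot(f^\perp)_2$ by the standard determinantal conormal formula. Using $(f^\perp)_1=(W')^\vee$ and the decomposition $(f^\perp)_2=(f_W^\perp)_2\oplus(W^\vee\cdot(W')^\vee)\oplus S^2(W')^\vee$, a direct computation yields
\[
\dim\bigl((f^\perp)_1\cdot(f^\perp)_2\bigr)=3(n-2)+3\binom{n-1}{2}+\binom{n}{3},
\]
and the whole subspace sits in the $T_3$-summands complementary to $S^3 W^\vee$. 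Only the Aronhold at $U=W$ contributes further: $d\mathrm{Aron}(f)\in S^3 W^\vee$ is nonzero since $f$ is smooth on the Aronhold hypersurface, whereas for $U\neq W$ the restriction $f|_U$ has $\dim\langle f|_U\rangle\le 2$ and lies in $\sigma_2(v_3(\P U))\subset\Sing\,\sigma_3(v_3(\P U))$, so $d\mathrm{Aron}(f|_U)=0$. Summing gives $\dim\hat{N}^\vee_f\ge 3(n-2)+3\binom{n-1}{2}+\binom{n}{3}+1$, which via a routine binomial identity equals $\binom{n+3}{3}-(3n+3)=\codim\sigma_3(v_3(\P V))$, matching the lower bound and forcing $f$ to be smooth. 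The main hurdle is the decomposition argument showing that $(f^\perp)_1\cdot(f^\perp)_2$ avoids the $S^3 W^\vee$-summand of $T_3$, which is precisely what lets the Aronhold direction contribute one linearly independent extra conormal vector and close the dimension count.
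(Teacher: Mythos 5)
Your proposal is correct, but it takes a genuinely different route from the paper. The paper's own proof of Corollary \ref{thm_d=3} is a two-line reduction: by Remark \ref{d<=3}(a) there are no degenerate forms when $d=3$, so by Corollary \ref{coro_span} every $f\in\sigma_3\setminus\sigma_2$ determines a unique plane $\P U$, and the fibration $\sigma_3(v_3(\P^n))\setminus\sigma_2\st{\pi}\Gr(\P U,\P^n)$ from the proof of Theorem \ref{thm_nondeg} has all fibers isomorphic to $\sigma_3(v_3(\P U))\setminus\sigma_2(v_3(\P U))$, which is smooth by the classical Aronhold-hypersurface result; hence the total space is smooth, with no equation-level computation at all. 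You instead fix $f$, split $V=W\oplus W'$ with $W=\langle f\rangle$, and argue pointwise via the conormal space: the differentials of the $4\times4$ minors of $\phi_{2,1}$ span $(f^\perp)_1\cdot(f^\perp)_2$ (the rank-$3$ hypothesis needed for the determinantal formula holds since $\dim\langle f\rangle=3$), your decomposition of $(f^\perp)_2$ and the resulting dimension $3(n-2)+3\binom{n-1}{2}+\binom{n}{3}$, located in the summands of $T_3$ complementary to $S^3W^\vee$, are correct, and the inherited Aronhold quartic $F\mapsto\mathrm{Aron}(\pi_W F)$ contributes one further independent conormal direction inside $S^3W^\vee$ precisely because $f\notin\sigma_2(v_3(\P W))$ is a smooth point of the Aronhold hypersurface; the total $\binom{n+3}{3}-3n-3$ equals $\codim\sigma_3$, forcing smoothness. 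Both arguments rest on the same $n=2$ input, but the paper's fibration is shorter and avoids the $d=3$ defining equations entirely, whereas your computation makes $\hat N^\vee_f\sigma_3$ explicit at every non-degenerate point, in the same spirit as the paper's treatment of $d\ge4$ in Theorems \ref{thm_nondeg} and \ref{thm_deg}, at the cost of invoking the inherited Aronhold equations behind Proposition \ref{eqn_s3}. Two minor points: the tangent-space lower bound via Grassmannian deformations is logically redundant once $\dim\hat N^\vee_f\ge\codim$ is shown (both proofs quietly use that $\sigma_3(v_3(\P^n))$ is irreducible of the expected dimension); and your side claim that $d\,\mathrm{Aron}(f|_U)=0$ for every $U\neq W$ is unnecessary and not true for a generic projection (there $f|_U$ is again nondegenerate), but since you only need the exhibited span as a lower bound for $\hat N^\vee_f$, this does not affect the argument.
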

\begin{proof}
By Remark \ref{d<=3} (a), there is no degenerate orbit in this case. Thus, it comes directly from the result on the Aronhold hypersurface (i.e. $n=2$ case, Theorem \ref{singAro}) and using the fibration argument in the proof of Theorem~\ref{sing3vero} for any $n\ge3$.
\end{proof}

\subsubsection{Cases of non-degenerate orbits ($d\ge4$)}

Here is the theorem for non-degenerate orbits for any $d\ge4$ and $n=2$:
\begin{Thm}[Non-degenerate locus]\label{thm_nondeg} For every $d\ge4$ and $n=2$, 
$\sigma_3(v_d(\P^n))\setminus\left(\mathcal{D}\cup\sigma_2(v_d(\P^n))\right)$ is smooth.
\end{Thm}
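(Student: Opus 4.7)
The plan is to reduce the smoothness question first to the case $n=2$ via a fibration over the Grassmannian, and then verify smoothness at a canonical representative of each of the three non-degenerate $\SL_3(\CC)$-orbits listed in Theorem \ref{normal_form} by a conormal-space computation.

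For the reduction, by Corollary \ref{coro_span} the span $U=\langle f\rangle$ is uniquely $3$-dimensional for any non-degenerate $f$. Consider the incidence variety
\[
 \mathcal{I}=\{(f,U)\in\P^N\times\Gr(3,n+1)\mid f\in\sigma_3(v_d(\P U))\}~,
\]
which is a Zariski-locally trivial $\sigma_3(v_d(\P^2))$-bundle over the smooth base $\Gr(3,n+1)$. On the open subset $\{\rank~\phi_{d-1,1}(f)=3\}$ the rule $f\mapsto(f,\langle f\rangle)$ is an algebraic inverse to the first projection, so that projection is an isomorphism of schemes onto its image there. Hence smoothness of $\sigma_3(v_d(\P^n))$ at a non-degenerate $f$ is equivalent to smoothness of $\sigma_3(v_d(\P^2))$ at the same point viewed inside the fiber, and I may assume $n=2$ henceforth.

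For $n=2$, by $\SL_3(\CC)$-equivariance and Theorem \ref{normal_form} it is enough to verify smoothness at the three canonical forms
\[
 f_1=x_0^d+x_1^d+x_2^d,\quad f_2=x_0^{d-1}x_1+x_2^d,\quad f_3=x_0^{d-2}x_1^2+x_0^{d-1}x_2~.
\]
For each $f_i$ one has $\dim\langle f_i\rangle=3$ and $f_i\notin\sigma_2$, which forces $\rank~\phi_{d-1,1}(f_i)=\rank~\phi_{d-\dtwo,\dtwo}(f_i)=3$, so Proposition \ref{conormal_prop} applies and Remark \ref{deg_conormal} further collapses the conormal space for $n=2$ to
\[
 \hat{N}^{\vee}_{f_i}\sigma_3(X)=(f_i^\perp)_{\dtwo}\cdot(f_i^\perp)_{d-\dtwo}\subset T_d~.
\]
Smoothness at $f_i$ is thus equivalent to the dimension identity $\dim\hat{N}^{\vee}_{f_i}\sigma_3(X)=\binom{d+2}{2}-9=\codim(\sigma_3(X),\P^N)$.

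The computational core is the explicit description of each apolar ideal $f_i^\perp$ and the dimension analysis of the multiplication map $(f_i^\perp)_{\dtwo}\otimes(f_i^\perp)_{d-\dtwo}\to T_d$. For $f_1$ the apolar ideal is generated by the three mixed quadrics $\{y_iy_j\mid i\ne j\}$ together with two top-degree relations, and the product expands directly as the span of the mixed monomials of degree $d$, whose dimension one matches with $\binom{d+2}{2}-9$. For $f_2$ and $f_3$ one describes $T/f_i^\perp$ via the Macaulay inverse system as an Artinian Gorenstein quotient of socle degree $d$, reads off its Hilbert function, and computes the image of the multiplication accordingly. The main obstacle is to carry these product calculations out uniformly in $d$: the apolar ideals of $f_2$ and $f_3$ have generators in more scattered degrees than for $f_1$, and $\dtwo$ depends on the parity of $d$. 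The cleanest completion is to split into the two parity cases, dispose of the smallest $d$ by direct inspection, and then run a uniform dimension count based on the Hilbert functions of the $T/f_i^\perp$ for large $d$.
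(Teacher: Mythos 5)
Your overall route is the paper's route: reduce to $n=2$ by the fibration over the Grassmannian coming from Corollary \ref{coro_span}, then check smoothness at the three normal forms of Theorem \ref{normal_form} by showing $\dim\hat{N}^{\vee}_{f_i}\sigma_3(X)={d+2\choose 2}-9$ via $(f_i^\perp)_{\dtwo}\cdot(f_i^\perp)_{d-\dtwo}$. The reduction step and the framing are fine. The problem is that the computational core for $f_2$ and $f_3$ --- which is the actual content of the theorem --- is not carried out, and the completion you propose would not suffice as stated. Knowing the Hilbert function of the Artinian Gorenstein algebra $T/f_i^\perp$ only gives you $\dim(f_i^\perp)_{\dtwo}$ and $\dim(f_i^\perp)_{d-\dtwo}$; it says nothing about the dimension of the image of the multiplication map, which is what you need. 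The quantity to compute is $H(I^2,d)$, where $I$ is the ideal generated by the degree-$2$ part of $f_i^\perp$: for $f_2$ one has $I=(y_0y_2,\,y_1^2,\,y_1y_2)$ and for $f_3$ one has $I=(y_0y_2-\tfrac{d-1}{2}y_1^2,\,y_1y_2,\,y_2^2)$, and the structural fact that makes the reduction to $I^2$ legitimate is that all remaining generators of $f_2^\perp$, $f_3^\perp$ live in degrees $\ge d$ resp.\ $\ge d-1$, which exceed $d-\dtwo\le d-2$, so $(f_i^\perp)_{\dtwo}$ and $(f_i^\perp)_{d-\dtwo}$ consist only of multiples of the quadrics. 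The paper then gets $H(I^2,d)={d+2\choose 2}-9$ for all $d\ge4$ at one stroke from the minimal free resolution $0\to T(-6)\to T(-5)^6\to T(-4)^6\to I^2\to 0$ (equivalently by a direct monomial/Minkowski-sum count); in particular no parity split in $d$ and no ``small $d$ by inspection'' step is needed. Your sketch stops exactly where this argument has to be supplied, and the hint that the obstacle is ``generators in more scattered degrees'' points in the wrong direction, since those higher generators are precisely irrelevant here.

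Two smaller points. For $f_1$ your description of the conormal space is incorrect: $(f_1^\perp)_{\dtwo}\cdot(f_1^\perp)_{d-\dtwo}$ is the degree-$d$ part of $(y_0y_1,y_0y_2,y_1y_2)^2$, which is \emph{not} the span of all mixed monomials of degree $d$ (that span has dimension ${d+2\choose 2}-3$); it omits exactly the nine monomials $y_i^d$ and $y_i^{d-1}y_j$, which is how one recovers ${d+2\choose 2}-9$. Alternatively, the paper dispenses with $f_1$ without any computation, since the Fermat orbit is dense in $\sigma_3(v_d(\P^2))$ and a variety is smooth at a general point. Finally, your claim that $\dim\langle f_i\rangle=3$ and $f_i\notin\sigma_2$ ``forces'' $\rank\,\phi_{d-\dtwo,\dtwo}(f_i)=3$ is glossed; for the three explicit normal forms this rank should be (and easily is) verified directly, e.g.\ from the apolar ideals, before invoking Proposition \ref{conormal_prop}.
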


\begin{proof}
We are enough to consider three different cases according to Theorem \ref{normal_form}. It is well-known that $\dim \sigma_3(v_d(\P^2))$ is $3\cdot2+2=8$, the expected one, for any $d\ge4$ (see e.g. \cite{AH}).\\

Case (i) $f_1=x_0^{d}+x_1^{d}+x_2^{d}$ (Fermat-type). It is well-known that this Fermat-type $f_1$ becomes an almost transitive $\SL_{3}(\CC)$-orbit, which corresponds to a general point of $\sigma_3(v_d(\PP^2))$, Thus, $\sigma_3(v_d(\PP^2))$ is smooth at $f_1$.

Case (ii) $f_2=x_0^{d-1}x_1+x_2^d$ (Unmixed-type). Say $X=v_d(\PP^2)$. By Remark \ref{deg_conormal} (i.e. $n=2$ case), we just need to consider $(f_2^\perp)_{\dtwo}\cdot(f_2^\perp)_{d-\dtwo}$ as (\ref{conormal2}) to compute $\dim \hat{N}^{\vee}_{f_2}\sigma_3(X)$. Say $s:=\dtwo$. For $d\ge4$, we have $2\le s \le d-s\le d-2$. Note that $\dim \hat{N}^{\vee}_{f_2}\sigma_3(X)\le \codim(\sigma_3(X),\P U)={d+2\choose 2}-9$. So, it is enough to show $\dim \hat{N}^{\vee}_{f_2}\sigma_3(X)\ge{d+2\choose 2}-9$ for proving non-singularity of $f_2$.

Since the summands of $f_2$ separate the variables (i.e. unmixed-type), we could see that the apolar ideal $f_2^\perp$ is generated as
\[f_2^\perp=\bigg(\{Q_1=y_0 y_2, Q_2=y_1^2, Q_3=y_1 y_2\}\bigcup~\{\trm{other generators in degree $\ge d$}\}\bigg)~.\]
So, we have
\begin{align*}
&(f_2^\perp)_s=\{h\cdot Q_i~|~\forall h\in T_{s-2},~i=1,2,3~\}~\trm{and}~(f_2^\perp)_{d-s}=\{h^\prime\cdot Q_i~|~\forall h^\prime\in T_{d-s-2},~i=1,2,3~\}\\
&\Rightarrow\quad\hat{N}^{\vee}_{f_2}\sigma_3(X)=(f_2^\perp)_s \cdot (f_2^\perp)_{d-s}=\{h^{\prime\prime}\cdot Q_i Q_j~|~\forall h^{\prime\prime}\in T_{d-4},~i,j=1,2,3~\}~.
\end{align*}
Thus, if we denote the ideal $(Q_1,Q_2,Q_3)$ by $I$, then $\dim\hat{N}^{\vee}_{f_2}\sigma_3(X)$ is equal to the value of Hilbert function $H(I^2,t)$ at $t=d$ (concentrating only on degree $d$-part of $(f_2^\perp)_s \cdot (f_2^\perp)_{d-s}$, other generators in degree $\ge d$ do not affect the dimension computation). But, it is easy to see that $I^2$ has a minimal free resolution as
\[0\to T(-6)\to T(-5)^6\to T(-4)^6\to I^2\to 0~,\]
which shows the Hilbert function of $I^2$ can be computed as
\begin{align*}
H(I^2,d)&=6{d-4+2\choose 2}-6{d-5+2\choose 2}+{d-6+2\choose 2}\\
&=\left\{ \begin{array}{ll}
 0 & \textrm{$(d\le3)$}\\
&\\
 {d+2\choose 2}-9 & \textrm{$(d\ge4)$}
  \end{array} \right.~.
\end{align*}
This implies that $\dim\hat{N}^{\vee}_{f_2}\sigma_3(X)={d+2\choose 2}-9$ for any $d\ge4$, which means that our $\sigma_3(X)$ is smooth at $f_2$ (see also Figure \ref{Mink1}).
\begin{figure}[!hbt]
\begin{align*}
\definecolor{zzzzff}{rgb}{0.6,0.6,1}
\definecolor{ffqqtt}{rgb}{1,0,0.2}
\definecolor{ttttff}{rgb}{0.2,0.2,1}
\begin{tikzpicture}[line cap=round,line join=round,>=triangle 45,x=2.0cm,y=2.0cm]
\clip(-0.3,-0.44) rectangle (2.32,2.22);
\fill[line width=1.6pt,color=zzzzff,fill=zzzzff,fill opacity=0.25] (0.98,1.73) -- (0.1,0.18) -- (0.23,0) -- (1.78,0) -- (1.78,0.37) -- cycle;
\draw [line width=0.4pt] (0.98,1.73)-- (0,0);
\draw [line width=0.4pt] (0,0)-- (2,0);
\draw [line width=0.4pt] (2,0)-- (0.98,1.73);
\draw [line width=1.6pt,color=zzzzff] (0.98,1.73)-- (0.1,0.18);
\draw [line width=1.6pt,color=zzzzff] (0.1,0.18)-- (0.23,0);
\draw [line width=1.6pt,color=zzzzff] (0.23,0)-- (1.78,0);
\draw [line width=1.6pt,color=zzzzff] (1.78,0)-- (1.78,0.37);
\draw [line width=1.6pt,color=zzzzff] (1.78,0.37)-- (0.98,1.73);
\draw (0.75,0.73) node[anchor=north west] {\large$P_1$};
\draw [->] (0.98,1.73) -- (0.98,2.1);
\draw [->] (0,0) -- (-0.29,-0.21);
\draw [->] (2,0) -- (2.3,-0.23);
\draw (0.7,2.23) node[anchor=north west] {$ j $};
\draw (-0.3,0.15) node[anchor=north west] {$ i $};
\draw (2.16,0.15) node[anchor=north west] {$ k $};
\draw (-0.03,0.02) node[anchor=north west] {$d-s$};
\begin{scriptsize}
\fill [color=ttttff] (0.98,1.73) circle (2.5pt);
\draw [color=ffqqtt] (0,0) circle (2.5pt);
\draw[color=ffqqtt] (2,0) circle (2.5pt);
\draw[color=ffqqtt] (1.89,0.18) circle (2.5pt);
\fill [color=ttttff] (1.78,0) circle (2.5pt);
\fill [color=ttttff] (1.78,0.37) circle (2.5pt);
\fill [color=ttttff] (0.23,0) circle (2.5pt);
\fill [color=ttttff] (0.1,0.18) circle (2.5pt);
\fill [color=ttttff] (0.88,1.55) circle (2.5pt);
\fill [color=ttttff] (1.08,1.55) circle (2.5pt);
\fill [color=ttttff] (0.22,0.38) circle (2.5pt);
\fill [color=ttttff] (0.36,0.18) circle (2.5pt);
\fill [color=ttttff] (0.47,0) circle (2.5pt);
\fill [color=ttttff] (1.67,0.17) circle (2.5pt);
\fill [color=ttttff] (1.53,0) circle (2.5pt);
\end{scriptsize}
\end{tikzpicture}
\definecolor{ffwwtt}{rgb}{1,0.4,0.2}
\definecolor{ffqqtt}{rgb}{1,0,0.2}
\definecolor{ccttqq}{rgb}{0.8,0.2,0}
\begin{tikzpicture}[line cap=round,line join=round,>=triangle 45,x=2.0cm,y=2.0cm]
\clip(-0.5,-0.44) rectangle (2.32,2.22);
\fill[line width=1.6pt,color=ffwwtt,fill=ffwwtt,fill opacity=0.25] (0.97,1.3) -- (0.28,0.16) -- (0.38,0) -- (1.66,0) -- (1.65,0.31) -- cycle;
\draw [line width=0.4pt] (0.97,1.3)-- (0.19,0);
\draw [line width=0.4pt] (0.19,0)-- (1.87,0);
\draw [line width=0.4pt] (1.87,0)-- (0.97,1.3);
\draw [line width=1.6pt,color=ffwwtt] (0.97,1.3)-- (0.28,0.16);
\draw [line width=1.6pt,color=ffwwtt] (0.28,0.16)-- (0.38,0);
\draw [line width=1.6pt,color=ffwwtt] (0.38,0)-- (1.66,0);
\draw [line width=1.6pt,color=ffwwtt] (1.66,0)-- (1.65,0.31);
\draw [line width=1.6pt,color=ffwwtt] (1.65,0.31)-- (0.97,1.3);
\draw (0.75,0.69) node[anchor=north west] {\large$P_2$};
\draw [->] (0.97,1.3) -- (0.98,1.8);
\draw [->] (0.19,0) -- (-0.29,-0.21);
\draw [->] (1.87,0) -- (2.3,-0.23);
\draw (0.7,1.91) node[anchor=north west] {$ j $};
\draw (-0.25,0.11) node[anchor=north west] {$ i $};
\draw (2.13,0.13) node[anchor=north west] {$k $};
\draw (0.13,0.01) node[anchor=north west] {$s$};
\begin{scriptsize}
\fill [color=ttttff] (0.97,1.3) circle (2.5pt);
\draw [color=ffqqtt] (1.87,0) circle (2.5pt);
\draw [color=ffqqtt] (0.19,0) circle (2.5pt);
\draw [color=ffqqtt] (1.76,0.15) circle (2.5pt);
\fill [color=ttttff] (1.66,0) circle (2.5pt);
\fill [color=ttttff] (1.65,0.31) circle (2.5pt);
\fill [color=ttttff] (0.38,0) circle (2.5pt);
\fill [color=ttttff] (0.28,0.16) circle (2.5pt);
\fill [color=ttttff] (0.89,1.17) circle (2.5pt);
\fill [color=ttttff] (1.06,1.17) circle (2.5pt);
\fill [color=ttttff] (0.39,0.33) circle (2.5pt);
\fill [color=ttttff] (0.49,0.17) circle (2.5pt);
\fill [color=ttttff] (0.58,0) circle (2.5pt);
\fill [color=ttttff] (1.55,0.16) circle (2.5pt);
\fill [color=ttttff] (1.45,0) circle (2.5pt);
\end{scriptsize}
\end{tikzpicture}
\definecolor{qqqqff}{rgb}{0,0,1}
\definecolor{qqzzzz}{rgb}{0,0.6,0.6}
\definecolor{ffqqtt}{rgb}{1,0,0.2}
\definecolor{ttttff}{rgb}{0.2,0.2,1}
\begin{tikzpicture}[line cap=round,line join=round,>=triangle 45,x=2.0cm,y=2.0cm]
\clip(-0.6,-0.44) rectangle (2.72,2.30);
\fill[line width=1.6pt,color=qqzzzz,fill=qqzzzz,fill opacity=0.25] (0.98,1.82) -- (0.11,0.34) -- (0.4,-0.03) -- (1.61,-0.04) -- (1.61,0.75) -- cycle;
\draw [line width=0.4pt] (0.98,1.82)-- (-0.11,-0.03);
\draw [line width=0.4pt] (-0.11,-0.03)-- (2.08,-0.04);
\draw [line width=0.4pt] (2.08,-0.04)-- (0.98,1.82);
\draw [line width=1.6pt,color=qqzzzz] (0.98,1.82)-- (0.11,0.34);
\draw [line width=1.6pt,color=qqzzzz] (0.11,0.34)-- (0.4,-0.03);
\draw [line width=1.6pt,color=qqzzzz] (0.4,-0.03)-- (1.61,-0.04);
\draw [line width=1.6pt,color=qqzzzz] (1.61,-0.04)-- (1.61,0.75);
\draw [line width=1.6pt,color=qqzzzz] (1.61,0.75)-- (0.98,1.82);
\draw (0.56,0.89) node[anchor=north west] {\large$P_1+P_2$};
\draw [->] (0.98,1.82) -- (0.99,2.31);
\draw [->] (-0.11,-0.03) -- (-0.41,-0.3);
\draw [->] (2.08,-0.04) -- (2.35,-0.32);
\draw (0.75,2.29) node[anchor=north west] {$j$};
\draw (-0.41,0.11) node[anchor=north west] {$i$};
\draw (2.19,0.11) node[anchor=north west] {$k$};
\draw (-0.19,-0.02) node[anchor=north west] {$d$};
\begin{scriptsize}
\fill [color=ttttff] (0.98,1.82) circle (2.5pt);
\draw [color=ffqqtt] (-0.11,-0.03) circle (2.5pt);
\draw [color=ffqqtt] (2.08,-0.04) circle (2.5pt);
\draw[color=ffqqtt] (1.86,0.32) circle (2.5pt);
\fill [color=ttttff] (1.61,-0.04) circle (2.5pt);
\fill [color=ttttff] (1.61,0.75) circle (2.5pt);
\fill [color=ttttff] (0.4,-0.03) circle (2.5pt);
\fill [color=ttttff] (0.11,0.34) circle (2.5pt);
\fill [color=ttttff] (0.87,1.63) circle (2.5pt);
\fill [color=ttttff] (1.09,1.63) circle (2.5pt);
\fill [color=ttttff] (0.25,0.58) circle (2.5pt);
\fill [color=ttttff] (0.4,0.35) circle (2.5pt);
\fill [color=ttttff] (0.68,-0.03) circle (2.5pt);
\fill [color=ttttff] (1.59,0.31) circle (2.5pt);
\draw [color=ffqqtt] (1.75,0.52) circle (2.5pt);
\draw[color=ffqqtt] (1.97,0.13) circle (2.5pt);
\draw[color=ffqqtt] (1.87,-0.04) circle (2.5pt);
\draw[color=ffqqtt] (1.73,0.14) circle (2.5pt);
\fill [color=qqqqff] (0.25,0.15) circle (2.5pt);
\fill [color=qqqqff] (0.54,0.16) circle (2.5pt);
\draw[color=ffqqtt] (0,0.15) circle (2.5pt);
\draw[color=ffqqtt] (0.16,-0.03) circle (2.5pt);
\end{scriptsize}
\end{tikzpicture}
\end{align*}
\caption{Case of $f_2=x_0^{d-1}x_1+x_2^d$. $P_1$ is the lattice polytope in $\RR_{\ge0}^3$ consisting of exponent vectors $(i,j,k)$ of the monomials $y_0^i y_1^k y_2^k$ in $(f_2^\perp)_{d-s}$ and $P_2$ is the one corresponding to $(f_2^\perp)_{s}$. $P_1+P_2$ is the Minkowski sum of two polytopes whose lattice points are exactly the exponent vectors of $\hat{N}^{\vee}_{f_2}\sigma_3(X)=(f_2^\perp)_{d-s}\cdot(f_2^\perp)_{s}$, which 
contains all the monomial of $T_d$ but 9 monomials $y_0^d,~~ y_0^{d-1}y_1,~~ y_0^{d-2}y_1^2,~~ y_0^{d-3}y_1^3,~~ y_0^{d-1}y_2,~~ y_0 y_2^{d-1},~~ y_2^d,~~ y_1 y_2^{d-1},~~ y_0^{d-2}y_1 y_2$. This also shows $\dim\hat{N}^{\vee}_{f_2}\sigma_3(X)={d+2\choose d}-9$.}
\label{Mink1}
\end{figure}

Case (iii) $f_3=x_0^{d-2}x_1^{2}+x_0^{d-1}x_2$ (Mixed-type). In this case, we similarly use a computation of $\dim \hat{N}^{\vee}_{f_3}\sigma_3(X)$ via $(f_3^\perp)_{s}\cdot(f_3^\perp)_{d-s}$ to show the smoothness of $f_3$ (recall $s:=\dtwo$ and $2\le s \le d-s\le d-2$).

Let $Q_1:=y_0y_2-\frac{d-1}{2}y_1^2\in T_2$. We easily see that 
\[f_3^\perp=\bigg(\{Q_1, Q_2=y_1 y_2, Q_3=y_2^2\}\bigcup~\{\trm{other generators in degree $\ge d-1$}\}\bigg)~.\]
Let $I$ be the ideal generated by three quadrics $Q_1,Q_2,Q_3$. By the same reasoning as (ii), we have
\begin{align*}
\dim\hat{N}^{\vee}_{f_3}\sigma_3(X)&=\dim(f_3^\perp)_s \cdot (f_3^\perp)_{d-s}= H(I^2,d)=\left\{ \begin{array}{ll}
 0 & \textrm{$(d\le3)$}\\
&\\
 {d+2\choose 2}-9 & \textrm{$(d\ge4)$}
  \end{array} \right.~,
\end{align*}
because in this case $I^2$ also has the same minimal free resolution $0\to T(-6)\to T(-5)^6\to T(-4)^6\to I^2\to 0$. Hence, we obtain the smoothness of $\sigma_3(X)$ at $f_3$ (see also Figure \ref{Mink2}).

\begin{figure}[!htb]
\begin{align*}
\definecolor{wwwwww}{rgb}{0.4,0.4,0.4}
\definecolor{zzzzff}{rgb}{0.6,0.6,1}
\definecolor{ffqqtt}{rgb}{1,0,0.2}
\begin{tikzpicture}[line cap=round,line join=round,>=triangle 45,x=2.0cm,y=2.0cm]
\clip(-0.3,-0.44) rectangle (2.32,2.22);
\fill[line width=1.6pt,color=zzzzff,fill=zzzzff,fill opacity=0.25] (0.98,1.73) -- (0,0) -- (1.49,0) -- (1.64,0.19) -- (1.63,0.62) -- cycle;
\draw [line width=0.4pt] (0.98,1.73)-- (0,0);
\draw [line width=0.4pt] (0,0)-- (2,0);
\draw [line width=0.4pt] (2,0)-- (0.98,1.73);
\draw [line width=1.6pt,color=zzzzff] (0.98,1.73)-- (0,0);
\draw [line width=1.6pt,color=zzzzff] (0,0)-- (1.49,0);
\draw [line width=1.6pt,color=zzzzff] (1.49,0)-- (1.64,0.19);
\draw [line width=1.6pt,color=zzzzff] (1.64,0.19)-- (1.63,0.62);
\draw [line width=1.6pt,color=zzzzff] (1.63,0.62)-- (0.98,1.73);
\draw (0.8,0.86) node[anchor=north west] {\large$P_1$};
\draw [->] (0.98,1.73) -- (0.98,2.1);
\draw [->] (0,0) -- (-0.29,-0.21);
\draw [->] (2,0) -- (2.3,-0.23);
\draw (0.73,2.13) node[anchor=north west] {$ j $};
\draw (-0.25,0.21) node[anchor=north west] {$ k $};
\draw (2.04,0.21) node[anchor=north west] {$ i $};
\draw (-0.01,0.03) node[anchor=north west] {$d-s$};
\draw [line width=1.6pt,dash pattern=on 5pt off 5pt,color=wwwwww] (1.77,0.4)-- (1.76,0);
\begin{scriptsize}
\draw [color=ffqqtt] (2,0) circle (2.5pt);
\draw [color=ffqqtt] (1.89,0.18) circle (2.5pt);
\fill [color=black] (1.63,0.62) circle (2.5pt);
\fill [color=black] (1.49,0) circle (2.5pt);
\fill [color=ffqqtt] (1.77,0.4) ++(-3pt,0 pt) -- ++(3pt,3pt)--++(3pt,-3pt)--++(-3pt,-3pt)--++(-3pt,3pt);
\draw [color=ffqqtt] (1.76,0) circle (2.5pt);
\fill [color=black] (1.64,0.19) circle (2.5pt);
\end{scriptsize}
\end{tikzpicture}
\definecolor{wwwwww}{rgb}{0.4,0.4,0.4}
\definecolor{ffwwtt}{rgb}{1,0.4,0.2}
\definecolor{ffqqtt}{rgb}{1,0,0.2}
\definecolor{ttttff}{rgb}{0.2,0.2,1}
\begin{tikzpicture}[line cap=round,line join=round,>=triangle 45,x=2.0cm,y=2.0cm]
\clip(-0.5,-0.44) rectangle (2.32,2.22);
\fill[line width=1.6pt,color=ffwwtt,fill=ffwwtt,fill opacity=0.25] (0.97,1.3) -- (0.19,0.01) -- (1.4,0) -- (1.51,0.16) -- (1.52,0.51) -- cycle;
\draw [line width=0.4pt] (0.97,1.3)-- (0.19,0);
\draw [line width=0.4pt] (0.19,0)-- (1.87,0);
\draw [line width=0.4pt] (1.87,0)-- (0.97,1.3);
\draw [line width=1.6pt,color=ffwwtt] (0.97,1.3)-- (0.19,0.01);
\draw [line width=1.6pt,color=ffwwtt] (0.19,0.01)-- (1.4,0);
\draw [line width=1.6pt,color=ffwwtt] (1.4,0)-- (1.51,0.16);
\draw [line width=1.6pt,color=ffwwtt] (1.51,0.16)-- (1.52,0.51);
\draw [line width=1.6pt,color=ffwwtt] (1.52,0.51)-- (0.97,1.3);
\draw (0.8,0.71) node[anchor=north west] {\large$P_2$};
\draw [->] (0.97,1.3) -- (0.98,1.8);
\draw [->] (0.19,0) -- (-0.29,-0.21);
\draw [->] (1.87,0) -- (2.3,-0.23);
\draw (0.73,1.85) node[anchor=north west] {$j$};
\draw (-0.25,0.15) node[anchor=north west] {$k$};
\draw (2.16,0.15) node[anchor=north west] {$i$};
\draw (0.13,0.03) node[anchor=north west] {$s$};
\draw [line width=1.6pt,dash pattern=on 5pt off 5pt,color=wwwwww] (1.63,0.34)-- (1.63,0);
\begin{scriptsize}
\draw[color=ffqqtt] (1.87,0) circle (2.5pt);
\draw [color=ffqqtt] (1.76,0.15) circle (2.5pt);
\fill [color=black] (1.52,0.51) circle (2.5pt);
\fill [color=black] (1.4,0) circle (2.5pt);
\fill [color=ffqqtt] (1.63,0.34) ++(-3pt,0 pt) -- ++(3pt,3pt)--++(3pt,-3pt)--++(-3pt,-3pt)--++(-3pt,3pt);
\draw [color=ffqqtt] (1.63,0) circle (2.5pt);
\fill [color=black] (1.51,0.16) circle (2.5pt);
\end{scriptsize}
\end{tikzpicture}
\definecolor{zzqqff}{rgb}{0.6,0,1}
\definecolor{qqzztt}{rgb}{0,0.6,0.2}
\definecolor{ffwwqq}{rgb}{1,0.4,0}
\definecolor{ttttff}{rgb}{0.2,0.2,1}
\definecolor{qqzzzz}{rgb}{0,0.6,0.6}
\definecolor{ffqqtt}{rgb}{1,0,0.2}
\begin{tikzpicture}[line cap=round,line join=round,>=triangle 45,x=2.0cm,y=2.0cm]
\clip(-0.6,-0.44) rectangle (2.72,2.32);
\fill[line width=1.6pt,color=qqzzzz,fill=qqzzzz,fill opacity=0.25] (0.98,1.82) -- (1.37,1.15) -- (1.37,0.69) -- (1.38,0.32) -- (1.26,0.13) -- (1.15,-0.04) -- (-0.11,-0.03) -- cycle;
\draw [line width=0.4pt] (0.98,1.82)-- (-0.11,-0.03);
\draw [line width=0.4pt] (-0.11,-0.03)-- (2.08,-0.04);
\draw [line width=0.4pt] (2.08,-0.04)-- (0.98,1.82);
\draw (0.45,0.67) node[anchor=north west] {\large$P_1+P_2$};
\draw [->] (0.98,1.82) -- (0.99,2.31);
\draw [->] (-0.11,-0.03) -- (-0.41,-0.3);
\draw [->] (2.08,-0.04) -- (2.35,-0.32);
\draw (0.76,2.24) node[anchor=north west] {$j$};
\draw (-0.41,0.11) node[anchor=north west] {$k$};
\draw (2.29,0.11) node[anchor=north west] {$i$};
\draw (-0.14,0.01) node[anchor=north west] {$d$};
\draw [line width=1.6pt,color=qqzzzz] (0.98,1.82)-- (1.37,1.15);
\draw [line width=1.6pt,color=qqzzzz] (1.37,1.15)-- (1.37,0.69);
\draw [line width=1.6pt,color=qqzzzz] (1.37,0.69)-- (1.38,0.32);
\draw [line width=1.6pt,color=qqzzzz] (1.38,0.32)-- (1.26,0.13);
\draw [line width=1.6pt,color=qqzzzz] (1.26,0.13)-- (1.15,-0.04);
\draw [line width=1.6pt,color=qqzzzz] (1.15,-0.04)-- (-0.11,-0.03);
\draw [line width=1.6pt,color=qqzzzz] (-0.11,-0.03)-- (0.98,1.82);
\draw [line width=1.6pt,dash pattern=on 5pt off 5pt,color=wwwwww] (1.64,0.7)-- (1.63,0.31);
\draw [line width=1.6pt,dash pattern=on 5pt off 5pt,color=wwwwww] (1.63,0.31)-- (1.63,-0.04);
\draw [line width=1.6pt,dash pattern=on 5pt off 5pt,color=wwwwww] (1.51,0.92)-- (1.51,0.5);
\draw [line width=1.6pt,dash pattern=on 5pt off 5pt,color=wwwwww] (1.51,0.5)-- (1.5,0.13);
\draw [line width=1.6pt,dash pattern=on 5pt off 5pt,color=wwwwww] (1.38,0.32)-- (1.39,-0.05);
\begin{scriptsize}
\draw[color=ffqqtt] (2.08,-0.04) circle (2.5pt);
\draw[color=ffqqtt] (1.86,0.32) circle (2.5pt);
\draw[color=ffqqtt] (1.63,-0.04) circle (2.5pt);
\fill[color=ffqqtt] (1.64,0.7) ++(-3pt,0 pt) -- ++(3pt,3pt)--++(3pt,-3pt)--++(-3pt,-3pt)--++(-3pt,3pt);
%\draw[color=ffqqtt] (1.51,0.92) circle (2.5pt);
\fill[color=ffqqtt] (1.51,0.92) ++(-3pt,0 pt) -- ++(3pt,3pt)--++(3pt,-3pt)--++(-3pt,-3pt)--++(-3pt,3pt);
\fill[color=ffqqtt] (1.51,0.5) ++(-3pt,0 pt) -- ++(3pt,3pt)--++(3pt,-3pt)--++(-3pt,-3pt)--++(-3pt,3pt);
\draw[color=ffqqtt] (1.75,0.52) circle (2.5pt);
\draw[color=ffqqtt] (1.97,0.13) circle (2.5pt);
\draw[color=ffqqtt] (1.87,-0.04) circle (2.5pt);
\draw[color=ffqqtt] (1.73,0.14) circle (2.5pt);
\fill [color=black] (1.37,0.69) circle (2.5pt);
\fill [color=black] (1.37,1.15) circle (2.5pt);
\fill [color=black] (1.38,0.32) circle (2.5pt);
\draw [color=ffqqtt] (1.5,0.13) circle (2.5pt);
%\draw[color=ffqqtt] (1.58,0.3) node {$K$};
\draw [color=ffqqtt] (1.63,0.31) circle (2.5pt);
%\draw[color=ffqqtt] (1.7,0.48) node {$L$};
\fill [color=black] (1.39,-0.05) circle (2.5pt);
%\draw[color=ffqqtt] (1.47,-0.26) node {$M$};
\fill [color=black] (1.26,0.13) circle (2.5pt);
\fill [color=black] (1.15,-0.04) circle (2.5pt);
\end{scriptsize}
\end{tikzpicture}
\end{align*}
\caption{Case of $f_3=x_0^{d-2}x_1^{2}+x_0^{d-1}x_2$. $P_1$ (resp. $P_2$) is the lattice polytope consisting of exponent vectors $(i,j,k)$ of the monomials $y_0^i y_1^k y_2^k$ in $(f_3^\perp)_{d-s}$ (resp. in $(f_3^\perp)_{s}$). A \ti{dashed} line means an equivalent relation between monomials given by the multiples of $Q_1$ in $P_1$ and $P_2$ and by those of $Q_i Q_j$'s in $P_1+P_2$. The quotient space $T_d/(f_3^\perp)_{d-s}\cdot(f_3^\perp)_{s}$ can be represented by 9 \ti{circle} monomials in $P_1+P_2$ $y_0^d,~~ y_0^{d-1}y_1,~~ y_0^{d-2}y_1^2,~~ y_0^{d-3}y_1^3,~~ y_0^{d-1}y_2,~~ y_0^{d-2} y_2^{2},~~ y_0^{d-3} y_1^2y_2,~~ y_0^{d-3}y_1 y_2^{2},~~ y_0^{d-2}y_1 y_2$ modulo \ti{dashed} relations, which says $\dim\hat{N}^{\vee}_{f_3}\sigma_3(X)={d+2\choose d}-9$, the non-singularity at $f_3$.}
\label{Mink2}
\end{figure}
\end{proof}

\begin{Remk} From the viewpoint of apolarity, the three cases in Theorem \ref{thm_nondeg} can be explained geometrically as follows: if we consider the base locus of the ideal $I$, which is generated by the three quadrics in each apolar ideal $f_i^\perp$, then case (i) corresponds to three distinct points, case (ii) to one reduced point and one non-reduced of length 2, and case (iii) to one non-reduced point of length 3 (not lying on a line).\end{Remk}

\subsection{Degenerate case : binary forms}\label{sect_deg}

Since there is no degenerate form for $d=3$ (see Remark \ref{d<=3} (a)), it is enough to consider the smoothness of the degenerate locus for $d\ge4$.

\begin{Thm}[Degenerate locus]\label{thm_deg} Let $\mathcal{D}$ be the locus of all the degenerate forms in $\sigma_3(v_d(\P^n))\setminus\sigma_2(v_d(\P^n))$. Then, for any $d\ge4, n\ge2$, $\sigma_3(v_d(\P^n))$ is singular on $\mathcal{D}$ if and only if $d=4$ and $n\ge3$.
\end{Thm}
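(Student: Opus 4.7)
The plan is to compute the affine conormal space $\hat{N}^\vee_f\sigma_3(X)$ at a general point $f\in D$ and compare its dimension to $\binom{n+d}{d}-3n-3$, the expected codimension of $\sigma_3(X)\subset\P^N$. By Lemma~\ref{deg_normal} (together with Remark~\ref{d<=3}(b) when $d=4$), after an $\SL_{n+1}(\CC)$-change of coordinates one may assume $f=\bar f(x_0,x_1)=x_0^d+\alpha x_1^d+\beta(x_0+x_1)^d$ with $\alpha,\beta\neq 0$, i.e.\ $\bar f$ is a general rank~$3$ binary form of degree $d$ in $R:=\CC[y_0,y_1]\subset T$. Let $J:=(y_2,\dots,y_n)\subset T$. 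Since $\partial_j f=0$ for $j\ge 2$, every graded piece of the apolar ideal splits as $(f^\perp)_k=(\bar f^\perp)_k\oplus J_k$ as a $\CC$-vector space, with $(\bar f^\perp)_k\subset R_k$ and $J_k$ equal to the span of degree-$k$ monomials involving at least one $y_j$, $j\ge 2$. By Remark~\ref{deg_conormal}, $\hat{N}^\vee_f\sigma_3(X)=(f^\perp)_s\cdot(f^\perp)_{d-s}$ with $s:=\dtwo$.

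Endow $T_d$ with the $y_{\ge 2}$-grading $T_d=\bigoplus_{m=0}^d R_{d-m}\otimes M_m$, where $M_m:=\CC[y_2,\dots,y_n]_m$. The product above is $y_{\ge 2}$-graded, and a direct computation shows that its $m$-th graded piece equals $(\bar f^\perp)_s(\bar f^\perp)_{d-s}\subset R_d$ when $m=0$, equals $\bigl((\bar f^\perp)_sR_{d-s-1}+R_{s-1}(\bar f^\perp)_{d-s}\bigr)\otimes M_1$ when $m=1$, and equals the full $R_{d-m}\otimes M_m$ for every $m\ge 2$; this last surjectivity follows from $R_aR_b=R_{a+b}$ and $M_aM_b=M_{a+b}$ together with the fact that any $m\ge 2$ admits a decomposition $m=k+\ell$ with $1\le k\le s$, $1\le\ell\le d-s$. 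For a general rank~$3$ binary form $\bar f$ of degree $d\ge 4$, the apolar ideal is a complete intersection $\bar f^\perp=(\bar Q,\bar R)$ with $\deg\bar Q=3$ and $\deg\bar R=\max(3,d-1)$, and the Hilbert function of $R/\bar f^\perp$ is $(1,2,3,\dots,3,2,1)$ of socle degree $d$; in particular $(\bar f^\perp)_k=R_{k-3}\cdot\bar Q$ for every $k\le d-2$ whenever $d\ge 5$. Plugging this in yields, for $d\ge 5$, $\dim(\bar f^\perp)_s(\bar f^\perp)_{d-s}=d-5$ and $\dim\bigl((\bar f^\perp)_sR_{d-s-1}+R_{s-1}(\bar f^\perp)_{d-s}\bigr)=d-3$, so $\dim\hat{N}^\vee_f=\binom{n+d}{d}-6-3(n-1)=\binom{n+d}{d}-3n-3$, matching the expected codimension of $\sigma_3(X)$ and showing that $f$ is a smooth point. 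For $d=4$ one has $s=d-s=2$ and $(\bar f^\perp)_2=0$, so the $m=0$ and $m=1$ pieces vanish, and $\dim\hat{N}^\vee_f=\binom{n+4}{4}-5-4(n-1)=\binom{n+4}{4}-4n-1$, which falls short of $\binom{n+4}{4}-3n-3$ by exactly $n-2$. Hence $f$ is a smooth point when $n=2$ and a singular point when $n\ge 3$.

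Finally, $D$ is an irreducible $\SL_{n+1}(\CC)$-invariant locally closed subvariety of $\sigma_3(v_d(\P V))\setminus\sigma_2(v_d(\P V))$, and $\Sing(\sigma_3(X))$ is Zariski-closed and $\SL_{n+1}(\CC)$-invariant; thus, when $d\ge 5$ or $(d,n)=(4,2)$, smoothness at a general point of $D$ gives $D\not\subset\Sing(\sigma_3(X))$, and when $d=4$ and $n\ge 3$, singularity at a general point, combined with closedness and group-invariance, yields $D\subset\Sing(\sigma_3(X))$. The main technical obstacle is the surjectivity statement in the $m\ge 2$ piece together with the identification $(\bar f^\perp)_k=R_{k-3}\cdot\bar Q$ for $k\le d-2$ when $d\ge 5$; the failure of the latter at $d=4$, where $s=d-s=2<\deg\bar Q=3$ forces both $(\bar f^\perp)_s$ and $(\bar f^\perp)_{d-s}$ to vanish, is exactly what produces the exceptional singular behaviour.
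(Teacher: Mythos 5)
Your computation itself is sound and is essentially the paper's argument in different bookkeeping: you use the same conormal description $\hat{N}^\vee_f\sigma_3(X)=(f^\perp)_{\dtwo}\cdot(f^\perp)_{d-\dtwo}$ from Remark~\ref{deg_conormal}, the same structural input on the binary apolar ideal (a complete intersection with a cubic generator and a generator of degree $d-1$), and your grading by degree in $y_2,\dots,y_n$ reproduces exactly the paper's monomial counts: $\binom{n+d}{d}-6-3(n-1)$ for $d\ge5$ and $\binom{n+4}{4}-5-4(n-1)$ for $d=4$, hence smooth versus a defect of $n-2$.

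The genuine gap is one of coverage for $d\ge 5$. By reducing via Lemma~\ref{deg_normal} you only treat a \emph{general} point of $D$, and accordingly you conclude only $D\not\subset\Sing(\sigma_3(X))$ in the non-exceptional cases. That is weaker than what Theorem~\ref{thm_deg} must deliver: in the proof of Theorem~\ref{sing3vero} it is combined with Theorem~\ref{thm_nondeg} to show $\Sing(\sigma_3)=\sigma_2$ for $d\ge5$, which requires smoothness at \emph{every} point of $D$. For $d\ge 5$ the locus $D$ contains non-general degenerate forms (binary forms of border rank $3$ and rank $d-1$, whose cubic apolar generator has a repeated root, e.g.\ with $\bar Q=y_0^2y_1$ up to coordinates), and these are not covered by your reduction; only for $d=4$ does Remark~\ref{d<=3}(b) guarantee that all of $D$ has the normal form. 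The paper handles an arbitrary $f_D\in D$ by invoking the Hilbert--Burch/complete-intersection structure of binary apolar ideals: $\rank\,\phi_{d-3,3}(f_D)=3$ forces a cubic generator $F$, the second generator has degree $d-1$, and the dimension count uses nothing else. The fix for your write-up is immediate for the same reason: your identities $(\bar f^\perp)_k=R_{k-3}\bar Q$ for $k\le d-2$ and the resulting counts $d-5$ and $d-3$ never use that $\bar Q$ has distinct roots, so you should run the argument for any $f\in D$ (with $\bar f$ any border-rank-$3$ binary form not in $\sigma_2$), rather than appealing to the general normal form and a density argument.
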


\begin{proof} Let $f_D$ be any form belong to $\mathcal{D}$. For this degenerate case, by Remark \ref{deg_conormal}, we have
\[\hat{N}^{\vee}_{f_D}\sigma_3(X)=(f_D^\perp)_{\dtwo}\cdot(f_D^\perp)_{d-\dtwo}~.\]
First of all, let us consider $f_D$ as a polynomial in $\CC[x_0,x_1]$ (i.e. $f_D=f_D(x_0,x_1)$). Then, by the Hilbert-Burch Theorem (see e.g. \cite[Theorem 1.54]{IK}) we know that $T/f_D^\perp$ is an Artinian Gorenstein algebra with socle degree $d$ and that $f_D^\perp$ is a complete intersection of two homogeneous polynomials $F, G$ of each degree $a$ and $b$ with $a+b=d+2$ \ti{as an ideal of $\CC[y_0,y_1]$}. Since $\rank~\phi_{d-3,3}(f_D)=3$, there is 1-dimensional kernel of $\phi_{d-3,3}(f_D)$ in $\CC[y_0,y_1]_3$, which gives one \ti{cubic} generator $F$ in $f_D^\perp$.

When $f_D$ is general, $f_D=x_0^{d}+\alpha x_1^{d}+\beta (x_0+x_1)^{d}$ for some $\alpha,\beta\in\CC^\ast$ by Lemma \ref{deg_normal}, so we have $F=y_0^2 y_1-y_0 y_1^2$. Even for the case $f_D$ being not general, we have $F=y_0^2 y_1$ up to change of coordinates, because the apolar ideal of this non-general $f_D$ corresponds to the case with one multiple root on $\P^1$ (see \cite{CG} and also \cite[Section 4]{LT}).

Therefore, we obtain that 
\[\trm{$f_D^\perp=\big(\trm{$F=y_0^2 y_1-y_0 y_1^2$ or $y_0^2 y_1$},G\big)$ for some polynomial $G$ of degree $(d-1)$}\]
and that $f_D^\perp$ \ti{as an ideal in $T=\CC[y_0,y_1,\ldots,y_n]$} has its degree parts $(f_D^\perp)_{\dtwo}$ and $(f_D^\perp)_{d-\dtwo}$, both of which are generated by $F,y_2,\ldots,y_n$, since $d\ge4$ so that $\dtwo, d-\dtwo<d-1$.

Now, let us compute the dimension of conormal space as follows:\\

i) $d=4$ case (i.e. $\dtwo=2$) : In this case, we have
\[\hat{N}^{\vee}_{f_D}\sigma_3(X)=(f_D^\perp)_{2}\cdot(f_D^\perp)_{2}=(y_2,\ldots,y_n)_2\cdot(y_2,\ldots,y_n)_2=(\{y_i y_j~|~2\le i,j\le n\})_4~.\] So, we get
\begin{align*}
\dim \hat{N}^{\vee}_{f_D}\sigma_3(X)&=\dim T_4 -\dim\big\langle y_0^4,y_0^3 y_1,\cdots,y_1^4\big\rangle-\dim\big\langle\{y_0^3\cdot\ell, y_0^2 y_1\cdot\ell, y_0 y_1^2\cdot\ell, y_1^3\cdot\ell~|~\ell=y_2,\ldots,y_n\}\big\rangle\\
&={4+n\choose 4}-5-4(n-1)~.
\end{align*}
This shows us that $\sigma_3(X)$ is singular at $f_D$ if and only if $n\ge3$, because the expected codimension is ${4+n\choose 4}-3n-3$.

ii) $d=5$ case (i.e. $\dtwo=2$) : Recall that $F$ is $y_0^2 y_1-y_0 y_1^2$ or $y_0^2 y_1$, the cubic generator of $f_D^\perp$. Then,
\[\hat{N}^{\vee}_{f_D}\sigma_3(X)=(f_D^\perp)_{2}\cdot(f_D^\perp)_{3}=(y_2,\ldots,y_n)_2\cdot(F,y_2,\ldots,y_n)_3~.\]
\begin{align*}
\dim \hat{N}^{\vee}_{f_D}\sigma_3(X)&=\dim T_5 -\dim\big\langle y_0^5,y_0^4 y_1,\cdots,y_1^5\big\rangle\\
&\quad-\dim\bigg\langle\{y_0^4\cdot\ell, y_0^3 y_1\cdot\ell, y_0^2 y_1^2\cdot\ell, y_0 y_1^3\cdot\ell, y_1^4\cdot\ell\}\setminus\{y_0 F\cdot\ell, y_1 F\cdot\ell~|~\ell=y_2,\ldots,y_n\}\bigg\rangle\\
&={5+n\choose 5}-6-3(n-1)=\trm{expected $\codim(\sigma_3(X),\P S^5 V)$}~,
\end{align*}
which gives that $\sigma_3(X)$ is smooth at $f_D$ in this case.

ii) $d\ge6$ case : Here we have $\hat{N}^{\vee}_{f_D}\sigma_3(X)=(f_D^\perp)_{\dtwo}\cdot(f_D^\perp)_{d-\dtwo}=(F,y_2,\ldots,y_n)_{\dtwo}\cdot(F,y_2,\ldots,y_n)_{d-\dtwo}~.$
\begin{align*}
\dim \hat{N}^{\vee}_{f_D}\sigma_3(X)&=\dim T_d -\dim\bigg\langle\{y_0^{d-1}\cdot\ell, y_0^{d-2} y_1\cdot\ell, \ldots, y_1^{d-1}\cdot\ell\}\setminus\{y_0^{d-4}F\cdot\ell,\ldots,y_1^{d-4}F\cdot\ell~|~\ell=y_2,\ldots,y_n\}\bigg\rangle\\
&\quad-\dim\bigg(\{y_0^d,y_0^{d-1} y_1,\cdots,y_1^d\}\setminus\{y_0^{d-6}\cdot F^2,y_0^{d-7}y_1\cdot F^2,\ldots,y_1^{d-6}\cdot F^2\}\bigg)\\
&={d+n\choose d}-\big\{d-(d-3)\big\}(n-1)-\big\{(d+1)-(d-5)\big\}\\&={d+n\choose d}-3(n-1)-6=\trm{expected $\codim(\sigma_3(X),\P S^d V)$}~,
\end{align*}
which implies that $\sigma_3(X)$ is also smooth at $f_D$.
\end{proof}

\subsection{Defining equations of $\Sing(\sigma_3(X))$} As an immediate corollary of Theorem \ref{sing3vero}, we obtain defining equations of the singular locus in our third secant of Veronese embedding $\sigma_3(X)$.

\begin{Coro} Let $X$ be the $n$-dimensional Veronese embedding as above. The singular locus of $\sigma_3(X)$ is cut out by $3\times 3$-minors of the two symmetric flattenings $\phi_{d-1,1}$ and $\phi_{d-2,2}$ unless $d=4$ and $n\ge3$ case, in which the (set-theoretic) defining ideal of the locus is the intersection of the ideal generated by the previous $3\times 3$-minors and the ideal generated by $3\times 3$-minors of $\phi_{d-1,1}$ and $4\times 4$-minors of $\phi_{d-\dtwo,\dtwo}$.
\end{Coro}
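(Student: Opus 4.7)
The plan is to combine Theorem \ref{sing3vero} (which identifies $\Sing(\sigma_3(X))$ set-theoretically) with the catalecticant equations for $\sigma_2(X)$ and $\sigma_3(X)$. By Theorem \ref{sing3vero} the singular locus equals $\sigma_2(X)$ outside the exceptional pair $(d,n)=(4,\geq 3)$, and equals $D\cup\sigma_2(X)$ in that exceptional case. So it suffices to describe each of these two loci set-theoretically via minors of $\phi_{d-1,1}$ and $\phi_{d-\dtwo,\dtwo}$.

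For the non-exceptional range, I would identify $\sigma_2(X)$ with the zero locus of the ideal $I_1$ generated by the $3\times 3$-minors of $\phi_{d-1,1}$ and of $\phi_{d-\dtwo,\dtwo}$. One inclusion is immediate from subadditivity of matrix rank under border rank. For the reverse inclusion, vanishing of the $3\times 3$-minors of $\phi_{d-1,1}(f)$ forces $\rank\phi_{d-1,1}(f)\leq 2$, which by Lemma \ref{span_lem} means $\dim\langle f\rangle\leq 2$, so that $f$ is essentially a binary form on a two-dimensional subspace $U\subset V$. The remaining vanishing of the $3\times 3$-minors of $\phi_{d-\dtwo,\dtwo}(f)$ then places $f$ in $\sigma_2(v_d(\P U))\subseteq\sigma_2(X)$ via the classical catalecticant theorem on the rational normal curve (see e.g. \cite[thm. 1.44--1.45]{IK}).

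For the exceptional pair $(d,n)=(4,\geq 3)$, let $I_2$ denote the ideal generated by the $3\times 3$-minors of $\phi_{d-1,1}$ together with the $4\times 4$-minors of $\phi_{d-\dtwo,\dtwo}$. The key observation is the set-theoretic equality $D\cup\sigma_2(X)=\{f\in\sigma_3(X):\dim\langle f\rangle\leq 2\}$. The condition $\dim\langle f\rangle\leq 2$ corresponds to the vanishing of the $3\times 3$-minors of $\phi_{d-1,1}$; and by Proposition \ref{eqn_s3} the condition $f\in\sigma_3(X)$ is encoded by the vanishing of the $4\times 4$-minors of both flattenings, but once $\rank\phi_{d-1,1}(f)\leq 2$, those of $\phi_{d-1,1}$ vanish automatically, leaving only the $4\times 4$-minors of $\phi_{d-\dtwo,\dtwo}$. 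Hence $V(I_2)=D\cup\sigma_2(X)$, and since $\sigma_2(X)\subseteq D\cup\sigma_2(X)$, the identity $V(I_1\cap I_2)=V(I_1)\cup V(I_2)=D\cup\sigma_2(X)$ produces the claimed defining ideal.

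The main non-formal step I anticipate is the reverse inclusion in the identification of $\sigma_2(X)$ with its minor locus: it rests on the reduction to the binary case and on invoking the classical catalecticant equations for the rational normal curve. Everything else is either a direct translation of rank drops into vanishing of minors, or the purely formal identity $V(I\cap J)=V(I)\cup V(J)$.
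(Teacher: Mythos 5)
Your proposal is correct and follows the same overall route as the paper's proof: decompose $\Sing(\sigma_3(X))$ using Theorem \ref{sing3vero}, identify $\sigma_2(X)$ and $D\cup\sigma_2(X)$ as the zero loci of the two catalecticant-minor ideals, and finish with the formal identity $V(I_1\cap I_2)=V(I_1)\cup V(I_2)$. The only substantive difference is at the $\sigma_2$ step: the paper simply quotes Kanev \cite[thm. 3.3]{Kan}, who shows that $\sigma_2(X)$ is cut out (even ideal-theoretically) by the $3\times3$-minors of $\phi_{d-1,1}$ and $\phi_{d-2,2}$, whereas you re-derive the set-theoretic statement by hand, using that $\rank\,\phi_{d-1,1}(f)\le 2$ forces $\dim\langle f\rangle\le 2$ (the image of $\phi_{d-1,1}(f)$ is exactly $\langle f\rangle$, as in Lemma \ref{span_lem}) and then invoking the classical Hankel/catalecticant description of secants of the rational normal curve; this is a legitimate self-contained alternative, since only the set-theoretic claim is needed here, while the citation buys brevity and the stronger ideal-theoretic fact. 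One small slip to fix: your $I_1$ is written with $\phi_{d-\dtwo,\dtwo}$, while the corollary (and Kanev) use $\phi_{d-2,2}$; this is harmless because your reduction-to-binary argument works verbatim for any $\phi_{a,d-a}$ with $2\le a\le d-2$, and the two flattenings coincide for $d\le 5$, but to prove the statement as written you should specialize to $a=2$. Your treatment of the exceptional case, namely $V(I_2)=\{f\in\sigma_3(X):\dim\langle f\rangle\le 2\}=D\cup\sigma_2(X)$ via Proposition \ref{eqn_s3} together with the automatic vanishing of the $4\times4$-minors of $\phi_{3,1}$ once its rank drops to $2$, is essentially the paper's appeal to Remark \ref{deg_conormal} and Proposition \ref{eqn_s3}.
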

\begin{proof}
It is well-known that $\sigma_2(X)$ is cut out by $3\times 3$-minors of the two $\phi_{d-1,1}$ and $\phi_{d-2,2}$ (see \cite[Theorem 3.3]{Kan}). It is also easy to see that $\mathcal{D}$, the locus of degenerate forms inside $\sigma_3(X)$, is cut out by $3\times 3$-minors of $\phi_{d-1,1}$ and $4\times 4$-minors of $\phi_{d-\dtwo,\dtwo}$ by the argument in Remark \ref{deg_conormal} and Proposition \ref{eqn_s3}. Thus, using these two facts the conclusion is straightforward by Theorem \ref{sing3vero}.
\end{proof}

\paragraph*{\textbf{Acknowledgements}} The author would like to express his deep gratitude to Giorgio Ottaviani for introducing the problem, giving many helpful suggestions to him, and encouraging him to complete this work. He also gives thanks to Luca Chiantini and Luke Oeding for useful conversations with them and anonymous referees for their appropriate and accurate comments.

\end{document}